\title{Flexibility of measurable and topological nilfactors in dynamical systems}
\author{Seljon Akhmedli}
\thanks{The author was partially supported by NSF grant DMS-2136217}
\address{Department of Mathematics, Northwestern University}
\email{seljonakhmedli2026@u.northwestern.edu}
\title[Flexibility of nilfactors in dynamical systems]{Flexibility of measurable and topological nilfactors in dynamical systems}
\renewcommand{\labelenumi}{\arabic{enumi}.}
\renewcommand{\labelenumii}{\alph{enumii}.}
\theoremstyle{definition}
\def\lbr{\left\{}
\def\rbr{\right\}}
\def\msg{{\mathscr G}}
\def\msk{{\mathscr K}}
\def\msb{{\mathscr B}}
\def\msd{{\mathscr D}}
\def\mso{{\mathscr O}}
\def\msa{{\mathscr A}}
\def\mss{{\mathscr S}}
\def\msf{{\mathscr F}}
\def\msh{{\mathscr H}}
\def\msl{{\mathscr L}}
\def\msp{{\mathscr P}}
\def\msc{{\mathscr C}}
\def\mst{{\mathscr T}}
\def\msm{{\mathscr M}}
\def\msi{{\mathscr I}}
\def\msn{{\mathscr N}}
\def\cE{{\mathcal E}}
\def\cF{{\mathcal F}}
\def\cB{{\mathcal B}}
\def\cC{{\mathcal C}}
\def\cH{{\mathcal H}}
\def\cP{{\mathcal P}}
\def\cN{{\mathcal N}}
\def\cR{{\mathcal R}}
\def\cI{{\mathcal I}}
\def\cG{{\mathcal G}}
\def\cA{{\mathcal A}}
\def\mh{{\mathbb  H}}
\def\mq{{\mathbb  Q}}
\def\me{{\mathbb  E}}
\def\mb{{\mathbb  B}}
\def\mr{{\mathbb  R}}
\def\mm{{\mathbb  M}}
\def\mn{{\mathbb  N}}
\def\mz{{\mathbb  Z}}
\def\ms{{\mathbb  S}}
\def\mp{{\mathbb  P}}
\def\mi{{\mathbb I}}
\def\lnfrac#1#2{\raise.7ex \hbox{\Small $#1$}%%
  \kern-.15em/\kern-.15em  \lower.2ex \hbox{\Small $#2$}}
\theoremstyle{plain}
\newtheorem{theorem}{Theorem}[section]
\newtheorem{corollary}[theorem]{Corollary}
\newtheorem{lemma}[theorem]{Lemma}
\newtheorem{proposition}[theorem]{Proposition}
\newtheorem{definition}[theorem]{Definition}
\theoremstyle{definition}
\newtheorem{remark}[theorem]{Remark}
\def\thechsec{{\thechapter}.{\thesection}}
\numberwithin{equation}{section}
\begin{document}

\begin{comment}

Nilsystems have arisen in various contexts both in ergodic theory and topological dynamics. Their standard properties have, for instance, been studied in \cite{Auslander} and \cite{Parry}. 

Given a minimal and ergodic system, we can discuss both measurable and topological properties. 

Every topological factor can be viewed as a measurable factor, but the converse is not always true. In fact, for the measurable and topological nilfactors this is precisely what determines when they are isomorphic or not(see for example... point out lemma).

Given a minimal and ergodic system, we can discuss both its measurable and topological nilfactors. 
We construct examples of strictly ergodic systems realizing all possible behaviors in the interplay of measurable and topological nilfactors. To build such examples, we adapt on an idea that stems from Furstenberg's construction of a minimal but not uniquely ergodic system on $\mathbb{T}^2$.

\end{comment}

\begin{abstract}
%Given a minimal and ergodic system, we can discuss both its measurable and topological nilfactors. 
We construct examples of minimal and uniquely ergodic systems realizing all possible behaviors in the interplay of measurable and topological nilfactors. To build such examples, we adapt an idea that stems from Furstenberg's construction of a minimal but not uniquely ergodic system on $\mathbb{T}^2$.
\end{abstract}

\newcommand{\dash}{\tikz[baseline=-0.7ex]{\draw[dashed] (0,0) -- (1.3em,0);}}

\renewcommand{\labelenumi}{\arabic{enumi}.}
\renewcommand{\labelenumii}{\alph{enumii}.}

\def\lbr{\left\{}
\def\rbr{\right\}}

\def\msg{{\mathscr G}}
\def\msk{{\mathscr K}}
\def\msb{{\mathscr B}}
\def\msd{{\mathscr D}}
\def\mso{{\mathscr O}}
\def\msa{{\mathscr A}}
\def\mss{{\mathscr S}}
\def\msf{{\mathscr F}}
\def\msh{{\mathscr H}}
\def\msl{{\mathscr L}}
\def\msp{{\mathscr P}}
\def\msc{{\mathscr C}}
\def\mst{{\mathscr T}}
\def\msm{{\mathscr M}}
\def\msi{{\mathscr I}}
\def\msn{{\mathscr N}}
\def\cE{{\mathcal E}}
\def\cF{{\mathcal F}}
\def\cB{{\mathcal B}}
\def\cC{{\mathcal C}}
\def\cH{{\mathcal H}}
\def\cP{{\mathcal P}}
\def\cN{{\mathcal N}}
\def\cR{{\mathcal R}}
\def\cI{{\mathcal I}}
\def\cG{{\mathcal G}}
\def\cA{{\mathcal A}}
\def\mh{{\mathbb  H}}
\def\mq{{\mathbb  Q}}
\def\me{{\mathbb  E}}
\def\mb{{\mathbb  B}}
\def\mr{{\mathbb  R}}
\def\mm{{\mathbb  M}}
\def\mn{{\mathbb  N}}
\def\mz{{\mathbb  Z}}
\def\ms{{\mathbb  S}}
\def\mp{{\mathbb  P}}
\def\mi{{\mathbb I}}

\pagenumbering{arabic}

%\newcounter{chsec}
\def\thechsec{{\thechapter}.{\thesection}}
\numberwithin{equation}{section}
%\numberwithin{theorem}{chsec}
%comment out next line if not want chapter numbers
%\numberwithin{section}{chapter}
%\numberwithin{equation}{chapter}

\maketitle

\section{Introduction}

Nilsystems and nilfactors of systems arise in convergence and recurrence problems in ergodic theory and topological dynamics. Prime examples of these can be found in \cite{topological characteristic factors} and \cite{HK paper}. Numerous measure-theoretic and topological properties, such as transitivity, minimality, ergodicity, and unique ergodicity, are equivalent in nilsystems, combining results of \cite{Auslander, Leibman, Lesigne, Parry2}. These equivalences are key in many recent developments in dynamics. 

To describe the behaviors that may arise, we assume throughout that our dynamical systems are endowed with both measurable and topological structure. %allowing us to discuss both measurable and topological factors. %Thus we assume that $(X,\mu,T)$ is a minimal and ergodic system, meaning $X$ is a compact metric space, $T$ is a minimal homeomorphism of $X$, and $\mu$ is a $T$-invariant ergodic Borel probability measure. 
Every topological factor naturally has the structure of a measurable factor, but the converse does not hold in general. %In fact, for the measurable and topological nilfactors, this precisely determines when they are isomorphic or not. 

The measurable $k$-step nilfactor of a system has been shown in works of Host and Kra in \cite{HK paper} to control the dynamical behavior of certain multiple ergodic averages. The topological analog of these factors was first introduced by Host, Kra and Maass in \cite{Host Kra and Maass} (see Section \ref{sec: notations and preliminaries} for definitions). A more recent interplay of the measurable and topological nilfactors has appeared in \cite{a model with topological pronilfactors} where Kra, Moreira, Richter and Robertson, solved a conjecture of Erd\H{o}s on sumsets. There, it was shown how these nilfactors control certain infinite patterns in sets with positive density. %Nilsystems are classic examples where the measurable and topological nilfactors, $Z_k$ and $X_k$, are isomorphic. 

The well-known Jewett-Krieger theorem says that every ergodic system has a strictly ergodic (minimal and uniquely ergodic) topological model \cite{{Jewett}, {Krieger}}. By a topological model, or just model, we mean a topological dynamical system equipped with an invariant probability measure that is measurably isomorphic to the original system.
In \cite{maximal pronilfactors}, it is shown that every ergodic system has a strictly ergodic model where the measurable and topological $k$-step nilfactors, $Z_k$ and $X_k$, are isomorphic. An older result of Lehrer \cite{Lehrer} obtaining topologically mixing models of ergodic systems is most relevant for this work. An important consequence of his theorem is the existence of systems where $Z_1$ and $X_1$ are not isomorphic; it then follows $Z_i$ and $X_i$ are not isomorphic as well for all $i>1$. Concrete examples of such systems can be found in, for instance, \cite{{nontrivial Kronecker trivial MEF}, {Gla}, {Lehrer}, {Parry}}. %Obtaining additionally topologically mixing models of ergodic systems is an older result of Lehrer \cite{Lehrer} which is most relevant for this work. An important consequence of his theorem is the existence of systems where $Z_1$ and $X_1$ are not isomorphic; it then follows $Z_i$ and $X_i$ are not isomorphic as well for all $i>1$. Concrete examples of such systems can be found in, for instance, \cite{{nontrivial Kronecker trivial MEF}, {Gla}, {Lehrer}, {Parry}}. 
We can illustrate these phenomena in the diagram below
  \begin{center}
\begin{tikzpicture}
   %Nodes
  \node (X) {X};
  \node[above right=0.5cm and 1.25cm of X] (Y) {$Z_k$};
  \node[below right=0.5cm and 1.25cm of X] (Z) {$X_k$};
  \node[right=1.25cm of Y] (A) {$Z_{k-1}$}; 
  \node[right=1.25cm of Z] (B) {$X_{k-1}$};
  \node[right=1.25cm of A] (C) {$Z_1$};
  \node[right=1.25cm of B] (D) {$X_1$};

  %Arrows
  \draw[->] (X) -- (Y);
  \draw[->] (X) -- (Z);
  \draw[->, dashed] (Y) -- (Z);
  \draw[->] (Y) -- (A);
  \draw[->] (Z) -- (B);
  \draw[->, dashed] (A) -- (B);
  \draw[->, dashed] (A) -- (C);
  \draw[->, dashed] (B) -- (D);
  \draw[->, dashed] (C) -- (D);
\end{tikzpicture}
\end{center}
 where \( Z_i \dasharrow X_i \) indicates either a measurable isomorphism for all $i$ or a proper factor map for each $i$. We construct examples realizing all possible behaviors in between these two phenomena. Understanding when the measurable and topological nilfactors are different constitutes partly to the goal of this paper. We note in a weak mixing system all nilfactors are trivial (see, for example, Proposition 20 of Chapter 8 in \cite{HK2}). We also note that nilsystems are classic examples of systems with $Z_k$ and $X_k$ isomorphic.

 \begin{theorem}\label{the theorem in introduction}
For all $0 \leq j \leq \ell \leq k,\; k\geq 1$, there exists a strictly ergodic system which admits the following:
\begin{enumerate}
    \item $Z_k$ is a nontrivial extension of $Z_{k-1}$
    \item $Z_i$ is topologically isomorphic to $X_i$ for all $0 \leq i \leq j$
    \item $Z_i$ is not measurably isomorphic to $X_i$ for all $j+1 \leq i \leq k$
    \item $X_i$ is topologically isomorphic to $X_{i+1}$ for all $i \geq \ell$.

\end{enumerate}
Furthermore, for every minimal, ergodic, and non-weak mixing $(X,\mu,T)$ there exist $j,\ell$, and $k$, such that $X$ satisfies properties (1)-(4).
 \end{theorem}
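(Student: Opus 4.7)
The plan is to split the proof into two parts: an explicit construction, for each admissible triple $(j, \ell, k)$, of a strictly ergodic system with the prescribed nilfactor structure, and a bookkeeping argument verifying that every minimal, ergodic, non-weak-mixing system fits the scheme for some indices. For the furthermore statement, since $(Z_i)$ and $(X_i)$ are monotone factor towers and non-weak-mixing forces $Z_1$ nontrivial, I would pick $k$ to be an index where $Z_k \not\cong Z_{k-1}$, $j$ the largest index for which $Z_i \cong X_i$ holds throughout $[0, j]$, and $\ell$ the smallest index beyond which $X_i$ has stabilized. The ordering $j \leq \ell \leq k$ then follows from compatibility of the two towers: if $\ell < j$ one would have $Z_\ell \cong X_\ell \cong X_{\ell+1} \cong Z_{\ell+1}$, causing $(Z_i)$ to stabilize short of level $k$ and contradicting (1).

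For the existence part, I would begin with an $\ell$-step nilsystem $N$ whose canonical $j$-step nilfactor $N_j$ will play the role of $Z_j = X_j$, and build $X$ as a height-$(k - \ell)$ tower of compact abelian skew products over $N$, adapting the template of Furstenberg's $\mathbb{T}^2$ construction. The defining cocycles $\phi_1, \ldots, \phi_{k - \ell}$ would be chosen so that, measurably, each $\phi_i$ is cohomologous to a polynomial cocycle of the appropriate degree, producing a genuine $k$-step nilsystem $Z_k$ with strict intermediate factors $Z_{j+1}, \ldots, Z_{k-1}$, while topologically each $\phi_i$ is sufficiently wild to fail any such cohomology, keeping the maximal topological pronilfactor of $X$ equal to $N$. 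Strict ergodicity of $X$ would then be established inductively up the tower: the base $N$ is strictly ergodic because it is a minimal nilsystem, and at each subsequent level a Furstenberg-style unique ergodicity criterion for isometric extensions applies once one shows the relevant cohomological equations admit no nontrivial measurable solutions.

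With the construction in place, conclusions (1)-(4) read off directly: (1) from nontriviality of the top extension, (2) from $Z_i = X_i = N_i$ for $i \leq j$, (3) from the measurable polynomial structure of the cocycles combined with the topological obstruction they present, and (4) from the topological wildness keeping the maximal pronilfactor at $N$. The principal obstacle is simultaneously securing the measurable polynomial structure, topological wildness, and unique ergodicity of each extension. Furstenberg's original $\mathbb{T}^2$ construction is minimal but not uniquely ergodic precisely because topological wildness tends to admit extra invariant measures, so the central technical task is to design the cocycles — likely by imposing a Diophantine condition on the rotation data of $N$ and carefully choosing Fourier coefficients — so that they remain topologically wild yet the cohomological equations governing unique ergodicity admit no nontrivial measurable solutions.
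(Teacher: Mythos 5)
Your mechanism is the right one and is essentially the paper's: affine towers twisted by Furstenberg-type measurable-but-not-continuous coboundaries, with the measurable nilfactors recovered through a measurable cohomology and the topological ones blocked by the failure of continuous cohomology. But as a proof the sketch has two concrete gaps. First, your architecture cannot realize the case $j=\ell=0$. There the system must be topologically weak mixing (all $X_i$ trivial) while $Z_k$ is a nontrivial $k$-step system; a tower of compact abelian skew products over a $0$-step (hence one-point) base has a group rotation as its first floor, so $X_1$ is automatically nontrivial. The paper must leave the torus for this case, starting from a topologically weak mixing Cantor system strong orbit equivalent to a Sturmian subshift and transporting the coboundary through the orbit equivalence (Lemma \ref{measurable but not continuous coboundary over minimal cantor system}, Proposition \ref{nontrivial measurable nils but trivial topological nils}). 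Second, and more seriously, your verification of (4) --- ``topological wildness keeps the maximal pronilfactor at $N$'' --- is asserted rather than proved, and it does not follow from the nonexistence of a continuous solution to the single cohomological equation you impose. To get $X_{\ell+1}\overset{t}\cong X_\ell$ one must exclude \emph{every} candidate $(\ell+1)$-step topological factor lying strictly between $X_\ell$ and $Z_{\ell+1}$; by the intermediate-factor lemma (Lemma \ref{intermediate extensions}) these are exactly the quotients of $Z_{\ell+1}$ by closed subgroups $H$ of the fiber group, and the finite cyclic $H$ produce the factors whose cocycle is $nx_\ell$, $n\neq 0$. Ruling these out requires that $nf$ fail to be a continuous coboundary for \emph{all} $n\neq 0$ (precisely Furstenberg's condition), plus, in the two-coboundary and product cases, the case analysis over closed subgroups of $\mathbb{T}^2$ and Lemma \ref{ofcourse this is true}. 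Your sketch never mentions these finite quotients.

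Three further points. (i) The ``appropriate degree'' cohomology you invoke requires a single $f$ whose transfer function is itself a coboundary, whose transfer function is again a coboundary, and so on $k-j$ times; arranging this simultaneously with non-continuity of the first transfer function is the actual content of Lemma \ref{coboundary is a coboundary is a ...} (square-summability of $\widehat{G_1}(n)/(e(n\alpha)-1)^{i-1}$ for all relevant $i$ together with divergence of the partial sums), and it is not automatic. (ii) For $0<j<\ell$ the paper does not use a single tower but a product of two towers, and then needs that both $Z_i$ and $X_i$ of a product split as products (Lemma \ref{product of top nil is top nil} and the cited measurable analog); if you insist on a single tower you must still justify the analogous splitting of the nilfactors. (iii) In the ``furthermore'' part, your choice of $\ell$ as ``the smallest index beyond which $X_i$ has stabilized'' presupposes that the topological tower stabilizes at a finite level below $k$, which fails, for instance, for a strictly ergodic inverse limit of nilsystems of unbounded step; your ordering argument for $j\leq\ell$ is fine, but the existence of a valid $\ell\leq k$ needs an argument (or a restriction) that the sketch does not supply.
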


To prove Theorem \ref{the theorem in introduction}, we draw inspiration from an example of a minimal but not uniquely ergodic system constructed by Furstenberg in \cite{Fur61}. There, it is shown that for $\alpha \notin \mathbb{Q}$, the system $$T(x,y)=(x+\alpha,y+f(x)), \;\;T\colon\mathbb{T}^{2} \rightarrow \mathbb{T}^{2}$$ is minimal and not uniquely ergodic if and only if 
\begin{equation}\label{eq: coboundary furstenberg}
nf \equiv F(x+\alpha)-F(x) \mod 1
\end{equation}
has a measurable solution for some $n \in \mathbb{Z} \setminus \{0\}$ but no continuous solution unless $n=0$. In \cite{Fur61}, an $f$ which has a measurable solution for $n=1$ but no continuous solution except when $n=0$ is constructed. We refer to $f$ as a measurable but not continuous coboundary over the system $(\mathbb{T},m_{\mathbb{T}},T_{\alpha})$. In Section \ref{sec: flexibility}, we use these coboundaries to construct systems where the measurable and topological $k$-step nilfactors are not measurably isomorphic after some point.

In Proposition \ref{nontrivial measurable nils but trivial topological nils}, we exhibit another flexibility of the nilfactors where for each $k\in \mathbb{N}$, we build a strictly ergodic system which has $Z_k$ as a nontrivial extension of $Z_{k-1}$ yet has trivial topological nilfactors. Such examples are known to exist by Lehrer \cite{Lehrer}, however, we provide a construction that is more explicit using a certain minimal Cantor system built by Durand, Frank, and Maass, in \cite{nontrivial Kronecker trivial MEF} along with techniques we develop in this article from Proposition \ref{proposition with one coboundary on k}.

\section{Preliminaries}
\label{sec: notations and preliminaries}

\subsection{Definitions and notations}\label{definition}
Throughout $\mathbb{N}$ denotes the set $\{1,2, 3, \dots \}$. By a dynamical system we mean any triple of the form $(X, \mu, T)$ where $X$ is a compact metric space endowed with the Borel $\sigma$-algebra and $T$ is a homeomorphism of $X$ preserving a Borel probability measure $\mu$. Namely, for any Borel set $B \subseteq X$ we have $\mu(T^{-1}(B))=\mu(B).$ Given two compact metric spaces $X$, $Y$, and a measurable map $f\colon X \rightarrow Y$, the push forward of $\mu$ under $f$ is the invariant measure on $Y$ defined by $(f_{*}\mu)(B)=\mu(f^{-1}(B))$ for all $B \subseteq Y$.

Since we have assumed throughout that the setting of our dynamical systems has an underlying topological structure, this allows us to discuss both measurable and topological factors.

    For a system $(X,\mu,T)$, we say $(Y,\nu,S)$ is a \textit{measurable factor} of $(X,\mu,T)$ if there exists a measurable map $\pi\colon X \rightarrow Y$, \textit{the measurable factor map}, such that $(S \circ \pi)(x)= (\pi \circ T)(x)$ for $\mu$-almost every $x \in X$ and $\pi_{*} \mu=\nu.$ Furthermore, if $\pi$ is a bijection with $\pi^{-1}$ measurable, then it is a \textit{measurable isomorphism}. We say $(Y, \nu, S)$ is a \textit{topological factor} of $(X, \mu, T)$ if there exists a continuous and surjective map $\rho\colon X \rightarrow Y$, \textit{the topological factor map}, such that $(S \circ \rho)(x)= (\rho \circ T)(x)$ for all $x\in X$ and $\rho_{*}  \mu=\nu.$ If $\rho\colon X \rightarrow Y$ is a homeomorphism, then it is a \textit{topological isomorphism}. We use the notation $\overset{m}\cong$ and $\overset{t}\cong$ to denote measurable and topological isomorphisms, respectively. 

Given a $k$-step nilpotent Lie group $G$ and $\Gamma \leq G$ a discrete cocompact subgroup, $X=G/\Gamma$ is a \textit{$k$-step nilmanifold}. Now fix $g \in G$. Then $T\colon X \rightarrow X$ given by $x \mapsto  g \cdot x$ is a \textit{nilrotation.} The Haar measure on $X$ is the unique measure invariant under all nilrotations and we denote it by $m_X$. A system $(X,m_X,T)$ where $X$ is a $k$-step nilmanifold and $T$ is a nilrotation is a \textit{$k$-step nilsystem}. A special class of nilsystems we consider are \textit{affine nilsystems}. Let $A$ be a $k\times k$ unipotent integer matrix, $b\in \mathbb{T}^{k}$, and $T\colon \mathbb{T}^{k} \rightarrow \mathbb{T}^{k}$ be given by $x \mapsto Ax+b$. Then $(\mathbb{T}^{k},m_{\mathbb{T}^{k}},T)$ is a \textit{$k$-step affine nilsystem}. Note there are nilsystems which are not affine.

%This allows us to naturally associate any topological $k$-step nilsystem to a measurable one, namely $(X,m_{X},T)$.

We say $(Y,\nu,S)$ is a \textit{maximal factor of $(X,\mu,T)$ with property $\mathcal{P}$} if every factor of $X$ with Property $\mathcal{P}$ is a factor of $Y$. For an ergodic system $(X, \mu, T)$, \textit{the measurable $k$-step nilfactor of $X$}, denoted by $Z_{k}$, is the maximal measurable factor that is an inverse limit of $k$-step nilsystems, \textit{maximal} being with respect to measurable factor maps. %means every $k$-step nilsystem that is a measurable factor of $X$ is a measurable factor of $Z_{k}$. %Its associated \textit{measurable} factor map we denote by $\pi_{k}\colon X \rightarrow Z_k$.
Similarly, if we have a minimal system $(X,T)$, then \textit{the topological $k$-step nilfactor of $X$}, denoted by $X_{k}$, is the maximal topological factor that is an inverse limit of $k$-step nilsystems, where \textit{maximal} is now with respect to topological factors. %When there is room for ambiguity, we write $Z_k(T)$ or $X_k(T)$ to specify the transformation. 
Uniqueness (up to isomorphism) for the maximal measurable nilfactors is easily seen via the semi-norm presentation given in \cite{HK paper}, and for the topological nilfactors, via the regionally proximal relation of order $k$ in \cite{Host Kra and Maass}. %We denote the associated \textit{topological} factor map by $\rho_{k}\colon X \rightarrow X_k$. 
The factors $Z_0$ and $X_0$ are both the trivial one-point system. 
%In particular, the spaces associated to these factors do not need to be nilmanifolds; but we always have a topology inherited from the inverse limit structure. 
%For inverse limits of nilsystems, minimality is equivalent to unique ergodicity (with the unique measure being Haar). This allows us to view $(X_k,T)$ as a measurable factor $(X_k,m_{X_k},T)$. %A system is strictly ergodic if it is both minimal and uniquely ergodic.
%Sometimes when two systems $(X,\mu,T)$ and $(Y,\nu,S)$ are in discussion, we will write $Z_{k}(X)$ to denote the $k$-step nilfactor of $X$ and $Z_{k}(Y)$ to denote the $k$-step nilfactor of $Y$. 

The factor $X_k$ is naturally endowed with Haar measure because for inverse limits of nilsystems, minimality is equivalent to unique ergodicity, the unique measure being Haar (see \cite{HK2}). In this way, we can view $X_k$ not only as a topological factor of our system but also as a measurable factor. Systems that are both minimal and uniquely ergodic are \textit{strictly ergodic}. 

Since $Z_k$ is the \textit{maximal} measurable $k$-step nilfactor, there exists a measurable factor map which we denote by $\tau_{k}\colon Z_k \rightarrow X_k$, $k\in \mathbb{N}$. Also, since $Z_k$ is in particular an inverse limit of $(k+1)$-step nilsystems, by maximality of $Z_{k+1}$, we have a measurable factor map $ Z_{k+1}\rightarrow Z_k$ and similarly we have a topological factor map $ X_{k+1} \rightarrow X_k$ for all $k\in \mathbb{N}$.

%The following fact is very obvious. We use it when it is more convenient to describe the measurable nilfactors of a system that is measurably isomorphic to the system we have. The topological analog is also true.

%\begin{fact} \label{isomorphic systems have isomorphic nils}
  %  Measurably (topologically) isomorphic systems have isomorphic measurable (topological) $k$-step nilfactors.
%\end{fact}

\subsection{Necessary constraints} \label{standard constraints} There are necessary constraints to the behavior of nilfactors within a system. We summarize them below:
  \begin{enumerate}

    \item If $Z_i\overset{m}\cong Z_{i+1}$ for some $i\geq 0$, then $Z_j\overset{m}\cong Z_i$ and $X_j\overset{t}\cong X_i$ for all $j \geq i$ (follows from combining Theorem 12 in Chapter 16 of \cite{HK2} and Proposition 4.11 and Theorem 10.1 of \cite{HK paper})
    
    %The first implication of measurable nilfactors coinciding follows from Theorem 12 in Chapter 16 of \cite{HK2} and $X_j\overset{t}\cong X_i$ follows by combining Proposition 4.11 and Theorem 10.1 of \cite{HK paper}).
    
     %\noindent The first implication of $Z_j\overset{m}\cong Z_i$ is Theorem 12 in Chapter 16 of \cite{HK2} and the second implication of $X_j\overset{t}\cong X_i$ follows by Theorem 5 in Chapter 13 of \cite{HK2}.
     
    \item If $X_i\overset{t}\cong X_{i+1}$ for some $i \geq 0$, then $X_j\overset{t}\cong X_i$ for all $j \geq i$ (see Theorem 3.8 of \cite{Dong}).
    
     \item If $Z_i\overset{m}\cong X_i$ for some $i \geq 0$, then $Z_j\overset{m}\cong X_j$ for all $j \leq i$ (since a measurable isomorphism between two systems induces bijections between their factors).

  %   \item If $Z_1$ or $X_1$ is trivial, then for all $i \geq 1$, $Z_i$ or $X_i$ is trivial, respectively. (a trivial $Z_1$ factor implies 
     
     %(A trivial $Z_1$ factor is equivalent to a system being measurably weak mixing (see Theorem 1.26 in \cite{Walters}). Weak mixing systems have trivial nilfactors (see Proposition 20, Chapter 8 of\cite{HK2}). Analogously, a trivial $X_1$ factor is equivalent to a system being topologically weak mixing, which is further equivalent to trivial topological nilfactors (see Theorem 3.13 in \cite{Shao and Ye}).)
\end{enumerate}

\subsection{Intermediate factors} 

Let $(Y,\nu,T)$ be a system and $K$ be a compact abelian group. The system $(Y\times K,\nu \times m_K, T)$, defined by $T(y,k)=(Ty,\rho(y)+k)$ with $\rho\colon Y \rightarrow K$ measurable, is an \textit{extension of $Y$ by $K$}. For $h \in K$, the map $V_h\colon Y \times K \rightarrow Y \times K$ given by $V_h(y,k)=(y,k+h)$ is a \textit{vertical rotation}. 

The following can be deduced from the proof of Lemma 7.3 in \cite{Furstenberg and Weiss}. We use the more detailed description in \cite{HK2}.
   % \begin{lemma}[Furstenberg and Weiss \cite{Furstenberg and Weiss}]\label{intermediate extensions}

%Let $X=Y \times K$ be an ergodic group extension of $Y$ and $W$ is an intermediate extension of $Y$, then $W$ is a group extension of $Y$ by the group $K/H$, where $H\leq G$ is a closed subgroup. 

\begin{lemma}[Host and Kra, Chapter 5, Lemma 19 \cite{HK2}] \label{intermediate extensions}
   Let $\pi\colon (X,\mu,T) \rightarrow (Y,\nu,S)$ be an extension by a compact abelian group $K$ such that $(X,\mu,T)$ is ergodic and let $p\colon W \rightarrow Y$ be an intermediate extension, meaning there exists a factor map $q\colon X \rightarrow W$ such that $p\circ q=\pi.$ Then $W$ is the quotient of $X$ under the action of some closed subgroup $H \leq K$, acting by vertical rotations.
\end{lemma}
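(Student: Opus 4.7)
The plan is to identify a closed subgroup $H \leq K$ such that $q\colon X \to W$ is measurably equivalent to the quotient map $X \to X/H$, using a Mackey-type analysis based on the disintegration of $\mu$ and the ergodicity hypothesis. Throughout, write $X = Y \times K$, $\mu = \nu \times m_K$, $T(y,k) = (Sy, k + \rho(y))$ for a measurable cocycle $\rho\colon Y \to K$, and let $T_W$ denote the induced dynamics on $W$ (so $q \circ T = T_W \circ q$ and $p \circ T_W = S \circ p$).

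First, I would disintegrate $\mu = \int_W \mu_w \, d\mu_W(w)$. Since $p\circ q = \pi$, each fiber $q^{-1}(w)$ sits inside $\pi^{-1}(p(w)) \cong K$, so each $\mu_w$ may be regarded as a Borel probability measure on $K$. The $T$-equivariance of the disintegration together with the skew-product form of $T$ yields
\begin{equation*}
\mu_{T_W w} \;=\; (V_{\rho(p(w))})_{*}\mu_w.
\end{equation*}
Hence the $K$-translation class of $\mu_w$ in the Polish space of probability measures on $K$ is $T_W$-invariant, and ergodicity of $W$ (inherited from ergodicity of $X$) forces it to be $\mu_W$-a.e.\ constant, equal to some class $[\mu_0]$.

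Next, I would argue that $\supp(\mu_0)$ is a coset of a closed subgroup $H \leq K$ and that $\mu_0$ is the normalized Haar measure of this coset. The coset structure follows from transitivity of the $q$-fiber equivalence relation: any identification $k \sim k+c$ must, by iteration, extend to identifications by the closed subgroup generated by $c$, so the supports of the $\mu_w$ are cosets of a single closed subgroup $H$. Granted also that $\mu_0 = m_{H+c}$ for some $c$, a measurable selection theorem produces $w \mapsto c_w \in K/H$ with $\mu_w = m_{c_w + H}$, and then $w \mapsto (p(w), c_w)$ is a measurable isomorphism $W \overset{m}\cong Y \times K/H = X/H$; the intertwining with the dynamics follows from the cocycle identity above.

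The step I expect to be the main obstacle is showing that $\mu_0$ is Haar on its support rather than a non-uniform $H$-invariant mixture. A priori, $\mu_0$ could be $\sum_j \alpha_j m_{H_0 + x_j}$ for some proper subgroup $H_0 \subsetneq H$ and unequal weights $\alpha_j$, with the orbits only fitting together to cover an $H$-coset through iteration of the equivalence. Forcing the weights to be equal requires the \emph{full} ergodicity of $X$ and not merely of $W$: any nontrivial $T$-equivariant weight assignment to the $H_0$-cosets inside an $H$-coset would descend to a nonconstant $T$-invariant function on $X$, contradicting ergodicity. Packaging this cleanly with measurable selection on the Polish space $\mathcal{M}(K)$ and on the space of closed subgroups of $K$ (with the Fell topology) is the most delicate part of the proof.
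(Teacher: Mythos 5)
First, note that the paper does not prove this lemma at all: it is quoted from Host--Kra (Chapter 5, Lemma 19 of \cite{HK2}), with the remark that it can be deduced from Lemma 7.3 of \cite{Furstenberg and Weiss}. The proof in those references is Fourier-analytic rather than measure-theoretic: one decomposes $L^2(X)$ over $L^2(Y)$ into the $\hat K$-isotypic components $L_\chi=\{f: f\circ V_h=\chi(h)f\}$, uses ergodicity of $X$ to show each $L_\chi$ is a free rank-one module $L^2(Y)\cdot\chi$ so that the conditional expectation onto the intermediate factor either annihilates $L_\chi$ or preserves it entirely, and then uses the fact that $L^\infty(W)$ is closed under products and conjugation to conclude that the surviving characters form a subgroup $\Lambda\leq\hat K$, whence $W=X/H$ with $H=\Lambda^{\perp}$. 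Your disintegration (Mackey-type) strategy is a genuinely different route, and your steps through the identity $\mu_{T_Ww}=(V_{\rho(p(w))})_*\mu_w$ and the a.e.\ constancy of the translation class $[\mu_0]$ are correct ($\mathcal M(K)/K$ is a standard Borel space, so an invariant measurable map is a.e.\ constant by ergodicity of the factor $W$).

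The genuine gap is exactly where you suspect it, and your proposed repairs do not close it. (i) The claim that $\supp(\mu_0)$ is a coset of a closed subgroup is asserted via ``any identification $k\sim k+c$ must, by iteration, extend to identifications by the closed subgroup generated by $c$,'' but the fiber equivalence relation induced by $q$ on $\{y\}\times K$ has no a priori translation invariance; that invariance is essentially the conclusion of the lemma, so this is circular. Knowing all $\mu_w$ lie in one translation class does not make $\supp(\mu_0)$ a coset: a priori $\mu_0$ could have trivial stabilizer, be a non-uniform absolutely continuous measure, or be supported on a union of cosets of some $H_0$ that is not itself a coset of a larger subgroup. (ii) Your ergodicity argument (a nonconstant $T$-invariant ``weight'' function) only rules out the one special shape $\sum_j\alpha_j m_{H_0+x_j}$ with unequal $\alpha_j$; it says nothing about equal-weight configurations whose support is not a coset (e.g.\ $\tfrac12(m_{H_0}+m_{H_0+x})$ with $2x\notin H_0$), nor about measures that are not $H_0$-invariant mixtures at all. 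What is missing is any use of the fact that $W$ is a \emph{factor}, i.e.\ that its pull-back $\sigma$-algebra is closed under intersections (equivalently, $L^\infty(W)$ is an algebra); in the standard proof this is precisely what forces the set of contributing characters to be a subgroup, and without importing that multiplicative structure into your disintegration argument (for instance via the mutual singularity of the $\mu_w$ together with the convolution identity $\int\mu_w\,d\nu_y(w)=m_K$) the key step does not go through.
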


\subsection{Coboundaries}
\begin{definition}\label{def of cob}
    For a system $(Y,\nu,T)$ and a compact abelian group $K$, we say $\rho\colon Y \rightarrow K$ is a \textbf{coboundary over $(Y,\nu,T)$} if there exists $ F\colon Y \rightarrow K$ measurable such that 
    \begin{equation} \label{eq:coboundary}
        \rho=F \circ T - F.
    \end{equation}
\end{definition}

\begin{remark} \label{cob soln are unique}
    Given an ergodic system $(X,\mu,T)$ and a coboundary $\rho\colon X \rightarrow \mathbb{T}$ over $(X,\mu,T)$, the solutions to the cohomological equation \eqref{eq:coboundary} are unique up to a constant. Indeed, suppose $\rho=f\circ T - f=g\circ T- g$ for some $f,g\colon  X \rightarrow \mathbb{T}$. We immediately have $f - g$ is $T$-invariant, hence constant by ergodicity.
\end{remark}

We refer to $\rho$ as a \textit{measurable but not continuous coboundary} if we have a measurable solution $F\colon Y \rightarrow K$ in \eqref{eq:coboundary} and there exists no continuous solution. However, $\rho$ itself is continuous throughout.

\subsection{Eigenfunctions}\label{eigenfunctions}

Given a system $(X,\mu,T)$, we say $f\colon X \rightarrow \mathbb{S}^{1}$, is an \textit{eigenfunction} if there exists $ \lambda \in \mathbb{S}^{1}$ such that $f\circ T=\lambda f$. It is a classical fact that a system having a trivial $Z_1$ (resp. $X_1$) is equivalent to being measurably (resp. topologically) weak mixing which is further equivalent to the system having no non-constant measurable (resp. continuous) eigenfunctions (see, for example, \cite{Shao and Ye} and \cite{Walters}). Furthermore, $L^2(Z_1)$ and $C(X_1)$ are spanned by the measurable and continuous eigenfunctions of $X$, respectively. In \cite{Parry}, it is mentioned for $\alpha,\beta \notin \mathbb{Q}$ rationally independent, and for a measurable but not continuous coboundary $f\colon \mathbb{T} \rightarrow \mathbb{T}$ over $(\mathbb{T},m_{\mathbb{T}},T_{\alpha})$, for the system $(\mathbb{T}^{2},m_{\mathbb{T}^{2}},T)$ where $$T(x,y)=(x+\alpha,y+f(x)+\beta),\; T\colon \mathbb{T}^{2} \rightarrow \mathbb{T}^{2},$$ $Z_1$ is $(\mathbb{T}^{2},m_{\mathbb{T}^{2}},T_{\alpha,\beta})$ and $X_1$ is $(\mathbb{T},T_{\alpha})$. Writing $f=F\circ T_{\alpha}-F$, the latter is easily seen by noticing the eigenfunctions for $T$ are $f_{n,m}(x,y)=e(nx)e(my-mF(x))$, $(n,m)\in \mathbb{Z}^{2}$, which are continuous only when $m=0$. We also note in \cite{Gla} the same example was studied but instead using the regionally proximal relation to compute $X_1$.

\section{Furstenberg coboundaries}

In this section, we adapt the measurable but not continuous coboundaries used by Furstenberg to our setting to first obtain systems which are measurably but not topologically isomorphic to certain nilsystems.

\begin{lemma} \label{coboundary is a coboundary is a ...}
     Let $k \in \mathbb{N}$, $\alpha,\beta \in \mathbb{T}\setminus\mathbb{Q}$. Then for all $0<j\leq k$ there exists a measurable but not continuous coboundary $f\colon \mathbb{T} \rightarrow \mathbb{T}$ over $(\mathbb{T},m_{\mathbb{T}},T_{\alpha})$ such that the system $(\mathbb{T}^{k},m_{\mathbb{T}^{k}},T)$ given by 
\begin{align*}
T(x_1,x_2,\dots,x_{k})=\Big(&x_1+\alpha,x_2+x_1,\dots,x_{j}+x_{j-1},x_{j+1}+f(x_1)+x_{j},\\ &x_{j+2}+x_{j+1},\dots,x_{k}+x_{k-1}\Big)
\end{align*}
     is strictly ergodic and measurably isomorphic to $(\mathbb{T}^{k},m_{\mathbb{T}^{k}},S)$ where $$S(x_1,x_2,\dots,x_{k})=(x_1+\alpha,x_2+x_1,\dots,x_{k}+x_{k-1}).$$ Furthermore, there exists a measurable but not continuous coboundary $g\colon\mathbb{T}\rightarrow \mathbb{T}$ over $(\mathbb{T},m_{\mathbb{T}},T_{\alpha})$ such that the system $(\mathbb{T}^{k+1},m_{\mathbb{T}^{k+1}},R)$ with $$R(x_1,x_2,\dots,x_{k+1})=(x_1+\alpha,x_2+g(x_1)+\beta,x_3+x_2,\dots,x_{k+1}+x_k)$$ is also strictly ergodic and measurably isomorphic to $(\mathbb{T}^{k+1},m_{\mathbb{T}^{k+1}},S')$ where $$S'(x_1,x_2,\dots,x_{k+1})=(x_1+\alpha,x_2+\beta,x_3+x_2,\dots,x_{k+1}+x_k).$$
\end{lemma}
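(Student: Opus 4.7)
The plan is to construct $f$ (respectively $g$) as a Fourier series over the rotation $(\mathbb{T},m_{\mathbb{T}},T_\alpha)$ whose coboundary equation $f = F\circ T_\alpha - F$ admits a chain of $m := k-j$ (respectively $k$) iterated measurable solutions $F^{(1)},\dots,F^{(m)}$, where $F^{(i)}\circ T_\alpha - F^{(i)} = F^{(i-1)}$ with $F^{(0)} := f$ and where $F^{(1)}$ itself is not continuous. I will then use this chain to define a measurable change of coordinates intertwining $T$ (respectively $R$) with the pure skew-product nilsystem $S$ (respectively $S'$), and deduce strict ergodicity via Furstenberg's criterion for Anzai skew products.

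For the construction of $f$, I would pick a sparse integer sequence $\{n_\ell\}$ with $\epsilon_\ell := |e(n_\ell\alpha)-1|$ decaying at a controlled polynomial rate (possible by Dirichlet's theorem), and set $f(x) = \sum_\ell c_{n_\ell}\,e(n_\ell x)$ with coefficients chosen so that: (i) $\sum_\ell |c_{n_\ell}| < \infty$, making $f$ continuous; (ii) $\sum_\ell |c_{n_\ell}|^2/\epsilon_\ell^{2m} < \infty$, so that the formal Fourier series $\widehat{F^{(i)}}(n_\ell) := c_{n_\ell}/(e(n_\ell\alpha)-1)^i$ defines an $L^2$ function for every $i\leq m$; and (iii) the Ces\`aro partial sums of the Fourier series of $F^{(1)}$ diverge at some point, forcing $F^{(1)}$ to be discontinuous by Fej\'er's theorem. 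Concretely, taking $\epsilon_\ell = \ell^{-a}$ with $a>0$ small enough and $c_{n_\ell} = \epsilon_\ell/\ell$ (with phases adjusted so that $f$ is $\mathbb{T}$-valued) satisfies all three requirements. The construction of $g$ is identical with $m$ replaced by $k$.

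With the solutions in hand, I define the measurable map $\Phi\colon \mathbb{T}^k\to\mathbb{T}^k$ by
$$\Phi(x_1,\dots,x_k) = \bigl(x_1,\dots,x_j,\ x_{j+1}-F^{(1)}(x_1),\ x_{j+2}-F^{(2)}(x_1),\ \dots,\ x_k-F^{(m)}(x_1)\bigr).$$
A coordinate-by-coordinate computation that repeatedly invokes the coboundary chain $F^{(i)}\circ T_\alpha - F^{(i)} = F^{(i-1)}$ verifies $\Phi\circ T = S\circ \Phi$ almost everywhere. Since $\Phi$ is a measure-preserving bijection with measurable inverse, it is a measurable isomorphism $(\mathbb{T}^k,m_{\mathbb{T}^k},T) \overset{m}\cong (\mathbb{T}^k,m_{\mathbb{T}^k},S)$, and in particular $T$ is ergodic. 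The analogous map handles $R\overset{m}\cong S'$.

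The main obstacle is proving strict ergodicity of $T$ and $R$, since the measurable conjugation $\Phi$ does not transport topological properties. I would argue it inductively by viewing $T$ as a tower of Anzai skew products over $(\mathbb{T},T_\alpha)$ and invoking Furstenberg's criterion: a $\mathbb{T}$-extension of a strictly ergodic base by a continuous cocycle $\phi$ is strictly ergodic iff $n\phi$ is not a measurable coboundary for any $n\neq 0$. The only nonstandard level is the $(j+1)$-st, where $\phi = f(x_1)+x_j$; if $n\phi$ were a measurable coboundary over the $j$-step base, then using $nf = nF^{(1)}\circ T_\alpha - nF^{(1)}$ one could subtract to conclude that $nx_j$ is a measurable coboundary over the $j$-step pure nilsystem, contradicting its known strict ergodicity. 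For the coordinates beyond the $(j+1)$-st, the restriction of $\Phi$ to the first $i$ coordinates gives a measurable isomorphism onto the corresponding truncation of $S$, reducing the required non-coboundary statement to strict ergodicity of $S$. The argument for $R$ is identical: if $n(g(x_1)+\beta)$ were a measurable coboundary over $(\mathbb{T},T_\alpha)$, the same subtraction would force $n\beta$ to be a coboundary there, which is impossible for $n\neq 0$ by the rational independence of $\alpha,\beta$, after which the induction proceeds as before.
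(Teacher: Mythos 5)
Your proposal is correct and follows essentially the same route as the paper: the iterated chain of coboundary solutions defining a triangular measurable conjugacy to $S$ (resp.\ $S'$), the Fourier-coefficient construction with two-sided polynomial control on $\|n_\ell\alpha\|$ to make $f$ continuous, all $F^{(i)}$ in $L^2$, and $F^{(1)}$ discontinuous via Fej\'er/Ces\`aro divergence, and strict ergodicity through Furstenberg's results on group extensions of uniquely ergodic systems. The only cosmetic difference is that the paper obtains ergodicity of each level directly from the measurable isomorphism with the ergodic affine system $S$ and then cites Proposition 3.10 of \cite{Fur81}, whereas you re-derive ergodicity level by level from the non-coboundary criterion; both are valid.
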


\begin{proof}

%Define $R:\mathbb{T}^{k} \rightarrow \mathbb{T}^{k}$ as $R(x_1,x_2,\dots, x_k)= (x_1+\alpha,x_2+x_1,...,x_{j}+x_{j-1},x_{j+1}+x_{j-1},x_{j+2}+x_{j+1},\dots, x_{\ell}+x_{\ell-1},x_{\ell+1}+x_{\ell},\dots,x_{k}+x_{k-1}).$
First we observe if 
\begin{equation}\label{eq: all coboundaries}
f=G_1 \circ T_\alpha -G_1, G_1=G_2 \circ T_\alpha-G_2,\dots, G_{k-j-1}=G_{k-j} \circ T_\alpha -G_{k-j},
\end{equation}
for some measurable $G_i\colon \mathbb{T} \rightarrow \mathbb{T}, 1 \leq i \leq k-j,$ %with $G_1$ being measurable but not continuous,
then the map $$\pi(x_1,x_2,\dots,x_k)=\Big(x_1,x_2,\dots,x_j,x_{j+1}-G_1(x_1),x_{j+2}-G_2(x_1),\dots,x_k-G_{k-j}(x_1)\Big)$$
gives a measurable isomorphism between $(\mathbb{T}^{k},m_{\mathbb{T}^{k}},T)$ and $(\mathbb{T}^{k},m_{\mathbb{T}^{k}},S).$ 
\begin{comment}
since 
\begin{align*}
%%\begin{multline*}
(\pi \circ T)(x_1,y_1,\dots,x_k,y_k) =(&x_1+x_0,y_1+y_0,x_2+x_1,y_2+y_1,\dots,x_{j+1}+x_{j},y_{j+1}+y_j-G_1(x_1),\\
&x_{j+2}+x_{j+1},y_{j+2}+y_{j+1}-G_2(x_1+x_0),\dots,\\
&x_{\ell},y_{\ell}+y_{\ell-1}-G_{\ell-j}(x_1+x_0),x_{\ell+1}+x_{\ell}-G_1(x_1),\\
&y_{\ell+1}+y_{\ell}-G_{\ell-j+1}(x_1+x_0),\dots,x_k+x_{k-1}-G_{k-\ell}(x_1
+x_0),\\
&y_k+y_{k-1}-G_{k-j}(x_1+x_0)) \\
\end{align*}
\begin{align*} (S\circ \pi)(x_1,y_1,\dots,x_k,y_k)=\Big(&x_1+x_0,y_1+y_0,x_2+x_1,y_2+y_1,\dots,x_{j+1}+x_j,
y_{j+1}+y_j-G_1(x_1),\\
&x_{j+2}+x_{j+1},y_{j+2}-G_2(x_1)+y_{j+1}-G_1(x_1),\dots,x_{\ell}+x_{\ell-1},\\
&y_{\ell}-G_{\ell-j}(x_1)+y_{\ell-1}-G_{\ell-j-1}(x_1), x_{\ell+1}-G_1(x_1)+x_{\ell},\\
&y_{\ell+1}-G_{\ell-j+1}(x_1)+y_{\ell}-G_{\ell-j},x_{\ell+2}-G_2(x_1)+x_{\ell+1}-G_1(x_1),\\
&y_{\ell+2}-G_{\ell-j+2}(x_1)+y_{\ell+1}-G_{\ell-j+1}(x_1),\dots,\\
&x_{k}-G_{k-\ell}(x_1)+x_{k-1}-G_{k-\ell-1}(x_1),\\
&y_k-G_{k-j}(x_1)+y_{k-1}-G_{k-j-1}(x_1)\Big)
%\end{multline*}
\end{align*}
\end{comment}
%Note $\pi\circ T=S \circ \pi$ because of $(\ref{eq: all coboundaries}).$ 
Thus, to prove the lemma, it is enough to find a measurable but not continuous coboundary $f$ and measurable coboundaries $G_i$ over $(\mathbb{T},m_{\mathbb{T}},T_\alpha)$. 

%Moreover, it follows by Lemma \ref{cohomologous cocycles give isomorphic systems} that $(\mathbb{T}^{k},m_{\mathbb{T}^{k}},R)$ is measurably isomorphic to $(\mathbb{T}^{k},m_{\mathbb{T}^{k}},S)$ because the cocycles $x_{j-1}$ and $f(x_1)+x_{j-1}$ are cohomologous. 

To guarantee $f$ is real-valued, we can arrange $\widehat{G_1}(n)=\widehat{G_1}(-n)$ where for $i\geq 1$, $\widehat{G_i}$ indicates the $i$-th Fourier coefficient of $G_i$. Since each of the $G_i$ are coboundaries, for all $1 \leq i \leq k-j$,
\begin{equation}\label{eq: all related to first coefficients}
\widehat{G}_i(n):=\frac{\widehat{G_1}(n)}{(e(n\alpha)-1)^{i-1}}.
\end{equation}
Furthermore, since all of the functions $G_i \in L^2(\mathbb{T})$, we need
\begin{equation}\label{eq:square summable}
\sum |\widehat{G_i}(n)|^{2} <\infty.
\end{equation}
To make $f$ continuous, it suffices to have 
\begin{equation}\label{eq: continuous}
    \sum |\widehat{G_1}(n) (e(n\alpha)-1)| <\infty.
\end{equation}
To ensure $G_1$ is not continuous, by Fej\'er's theorem (see Theorem 3.1(b) in \cite{Katznelson}) it is enough for the partial sum of the Fourier series of $G_1$ to not Ces\'aro uniformly converge to $G_1$, that is,
\begin{equation}\label{eq: non cesaro}
    \displaystyle \frac{1}{N} \sum_{m=-N}^{N} \sum_{n=-m}^{m} \widehat{G_1}(n) e(nx)\; \mathrm{does\; not\; converge\; to\;} G_1(x)\; \mathrm{as\;} N \rightarrow \infty.
\end{equation}
We now construct such $f$ and $G_i$ which obey the properties above.

Indeed, since $\alpha \notin \mathbb{Q}$, by minimality of $(\mathbb{T},m_{\mathbb{T}},T_{\alpha})$, for all $\varepsilon>0$ there exists a subsequence of integers $\{n_{r}\}_{r\in \mathbb{Z}}$ such that
\begin{equation}\label{eq:first inequality}
\frac{1}{|r|^{2\varepsilon}} \leq ||n_r \alpha||< \frac{1}{|r|^{\varepsilon}}
\end{equation} where $||\cdot||$ denotes the closest distance to an integer.
Define $\widehat{G}_1(n)=\frac{1}{|r|}$, when $n=n_r$ for some $r \in \mathbb{Z}$, and $0$ otherwise. Then using inequality (\ref{eq:first inequality}), condition (\ref{eq: continuous}) is satisfied, and for sufficiently small $\varepsilon>0$, where $\varepsilon$ depends on $k-j$, condition (\ref{eq:square summable}) is satisfied. %To break uniform Cesaro convergence it is enough to arrange (\ref{eq: non cesaro}) to occur at $x=0$. 
Notice the sequence $s_m:=\sum_{n=-m}^{m}\widehat{G_1}(m)$ diverges, therefore $$\displaystyle \frac{1}{N} \sum_{m=-N}^{N} \sum_{n=-m}^{m} \widehat{G_1}(n) \rightarrow \infty$$ satisfying condition (\ref{eq: non cesaro}) at $x=0$. Hence $$f(x)=\sum_{r \neq 0} \frac{e(n_r \alpha)-1}{|r|}e(n_r x)$$ is our desired measurable but not continuous coboundary.

It is clear $T$ preserves $m_{\mathbb{T}^{k}}$. Since $\alpha$ is irrational, $S$, and therefore $T$, must be ergodic. Also, notice $(\mathbb{T}^{k},m_{\mathbb{T}^{k}},T)$ is a sequence of ergodic group extensions by $\mathbb{T}$, starting from the uniquely ergodic system $$(x_1,x_2,\dots,x_j)\mapsto(x_1+\alpha,x_2+x_1,\dots,x_j+x_{j-1}).$$ Moreover, ergodic group extensions of uniquely ergodic systems are uniquely ergodic (see Proposition 3.10 in \cite{Fur81}), implying unique ergodicity of $T$. Since $\mathrm{Supp}(m_{\mathbb{T}^{k}})=\mathbb{T}^{k}$, we have $T$ is minimal and obtain strict ergodicity (see Proposition 3.6 in \cite{Fur81}). 

The same argument can be applied to show $(\mathbb{T}^{k+1},m_{\mathbb{T}^{k+1}},R)$ is isomorphic to $(\mathbb{T}^{k+1},m_{\mathbb{T}^{k+1}},S')$ with the only difference being \eqref{eq: all coboundaries} becomes
\begin{equation}\label{eq: all coboundaries 2}
f=G_1 \circ T_\alpha -G_1, G_1=G_2 \circ T_\alpha-G_2,\dots, G_{k-1}=G_{k} \circ T_\alpha -G_{k},
\end{equation}
for some measurable $G_i\colon \mathbb{T} \rightarrow \mathbb{T}, 1 \leq i \leq k.$ \end{proof}

The following is a rephrasing of a result from \cite{Fur61} but is also an immediate consequence of Lemma \ref{coboundary is a coboundary is a ...}. 

\begin{corollary}[Furstenberg \cite{Fur61}] \label{meas but not continuous coboundaries always exists}
    For all $\alpha \in \mathbb{T} \setminus \mathbb{Q}$, there exists a measurable but not continuous $\mathbb{T}$-valued coboundary over $(\mathbb{T},m_{\mathbb{T}},T_{\alpha}).$ 
    %$\{n_k\}_{k\in \mathbb{N}}$ such that the cocycle $f:\mathbb{T}\rightarrow \mathbb{T}$ defined by $$f(x)=\sum_{k\neq0}\Big( \frac{e(n_k\alpha)-1}{|k|}\Big)e(n_kx)$$ is a measurable but not continuous coboundary over $(\mathbb{T},m_{\mathbb{T}},T_{\alpha}).$
\end{corollary}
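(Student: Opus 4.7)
The plan is to appeal to Lemma \ref{coboundary is a coboundary is a ...}, whose proof explicitly constructs a measurable but not continuous coboundary $f\colon \mathbb{T} \to \mathbb{T}$ over $(\mathbb{T}, m_{\mathbb{T}}, T_\alpha)$ as an intermediate step (the map $f$ from the first half of the proof, before it is used to witness the isomorphism $T \cong S$). Extracting just this piece of the construction gives the corollary; no additional ideas are needed.

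Concretely, the construction proceeds as follows. Since $\alpha \notin \mathbb{Q}$, the orbit $\{n\alpha : n \in \mathbb{Z}\}$ is dense in $\mathbb{T}$, so I would first select a subsequence of integers $\{n_r\}_{r \in \mathbb{Z} \setminus \{0\}}$ and a small $\varepsilon > 0$ with
\[
\frac{1}{|r|^{2\varepsilon}} \leq \|n_r \alpha\| < \frac{1}{|r|^{\varepsilon}},
\]
where $\|\cdot\|$ denotes the closest distance to an integer. Then I would define
\[
F(x) = \sum_{r \neq 0} \frac{1}{|r|}\, e(n_r x), \qquad f(x) = F(x+\alpha) - F(x) = \sum_{r \neq 0} \frac{e(n_r \alpha) - 1}{|r|}\, e(n_r x),
\]
so that $f = F \circ T_\alpha - F$ by construction.

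The remaining work is three short verifications: (a) $F \in L^2(\mathbb{T})$, which follows from $\sum 1/r^2 < \infty$; (b) $f$ is continuous, because the upper estimate on $\|n_r\alpha\|$ forces $\sum |e(n_r\alpha) - 1|/|r| \lesssim \sum |r|^{-1-\varepsilon} < \infty$, so the Fourier coefficients of $f$ are absolutely summable; and (c) $F$ is not continuous. For (c) I would evaluate at $x = 0$ and observe that the Cesàro averages of the symmetric partial sums $\sum_{|n|\leq m}\widehat{F}(n)$ diverge (they behave like a slowly growing harmonic-type series), so by Fejér's theorem $F$ cannot be continuous. Finally, by Remark \ref{cob soln are unique}, any two solutions to the cohomological equation over the ergodic system $(\mathbb{T}, m_{\mathbb{T}}, T_\alpha)$ differ by a constant, so the non-continuity of $F$ rules out any continuous solution.

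The only genuinely delicate point is a balancing act among three decay conditions: the Fourier coefficients of $F$ must decay slowly enough to prevent continuity (ruling out absolute summability) yet fast enough to lie in $\ell^2$, and after being multiplied by the damping factor $(e(n_r\alpha) - 1)$ they must become absolutely summable. This is exactly what the Liouville-type choice of $\{n_r\}$ with $\|n_r\alpha\|$ very small achieves; its existence is the sole nontrivial input and is guaranteed by the irrationality of $\alpha$.
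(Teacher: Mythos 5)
Your proposal is correct and matches the paper exactly: the paper states this corollary as an immediate consequence of Lemma \ref{coboundary is a coboundary is a ...}, whose proof contains precisely the construction you describe (the Liouville-type choice of $\{n_r\}$, the coefficients $1/|r|$, the three decay verifications, and the Fej\'er/Ces\`aro divergence at $0$ combined with Remark \ref{cob soln are unique}). The only cosmetic omission is the symmetry $n_{-r}=-n_r$ ensuring $F$ is real-valued (hence $\mathbb{T}$-valued), which the paper also only mentions in passing.
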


\begin{comment}
\begin{remark}\label{one coboundary does not change much}
    For $k\in \mathbb{N}$ and $\alpha \notin \mathbb{Q}$, there exists a measurable but not continuous coboundary $f:\mathbb{T} \rightarrow \mathbb{T}$ over $T_{\alpha}$ such that the system $(\mathbb{T}^{k},m_{\mathbb{T}^{k}},T)$ with $$T(x_1,x_2,\dots,x_k)=\Big(x_1+\alpha,x_2+x_1,\dots,x_j+x_{j-1},x_{j+1}+f(x_1)+x_j,x_{j+2}+x_{j+1},\dots,x_{k}+x_{k-1}\Big)$$ is strictly ergodic and measurably isomorphic to $(\mathbb{T}^{k},m_{\mathbb{T}^{k}},S)$ where $$S(x_1,x_2,\dots,x_k)=(x_1+\alpha,x_2+x_1,\dots,x_k+x_{k-1}).$$The proof is the exact same as in Lemma \ref{coboundary is a coboundary is a ...} but with one less coordinate. Note that in this example, $(\mathbb{T}^{k},m_{\mathbb{T}^{k}},S)$ does not have $(\mathbb{T},m_{\mathbb{T}},T_{\beta})$ as a measurable factor, unlike in Lemma \ref{coboundary is a coboundary is a ...}.
\end{remark}
\end{comment}

\begin{remark}\label{two coboundaries does not change much}
    Let $k\in \mathbb{N}$ and $\alpha\notin \mathbb{Q}$. By Lemma \ref{coboundary is a coboundary is a ...} there exists a measurable but not continuous coboundary $f\colon \mathbb{T} \rightarrow \mathbb{T}$ over $(\mathbb{T},m_{\mathbb{T}},T_\alpha)$ such that the system on $\mathbb{T}^{k}$ given by 
    \begin{align*}
    (x_1,x_2,\dots,x_k)\longmapsto (&x_1+\alpha,x_2+x_1,\dots,x_{\ell}+x_{\ell-1},x_{\ell+1}+f(x_1)+x_{\ell},\\ &x_{\ell+2}+x_{\ell+1},\dots,x_{k}+x_{k-1})
    \end{align*}
    is strictly ergodic and measurably isomorphic to the latter without the $f$. Since cohomologous extensions over the same system give rise to isomorphic systems, for $\beta \notin \mathbb{Q}$ rationally independent from $\alpha$, the system on $\mathbb{T}^{k+1}$ defined by
    \begin{align*}
    (x_1,x_2,\dots,x_{k+1}) \longmapsto (&x_1+\alpha,x_2+x_1,\dots,x_{\ell}+x_{\ell-1},x_{\ell+1}+x_{\ell},\\ &x_{\ell+2}+x_{\ell+1}\dots,x_{k}+x_{k-1},x_{k+1}+f(x_1)+\beta)
    \end{align*}
    is strictly ergodic and measurably isomorphic to $$(x_1,x_2,\dots,x_{k+1}) \longmapsto (x_1+\alpha,x_2+x_1,\dots,x_{k}+x_{k-1},x_{k+1}+\beta).$$ Combining these, the latter system is measurably isomorphic to \begin{align*}
      (x_1,x_2,\dots,x_{k+1})\longmapsto\Big(&x_1+\alpha,x_2+x_1,\dots,x_{\ell}+x_{\ell-1},x_{\ell+1}+f(x_1)+x_{\ell},\\ &x_{\ell+2}+x_{\ell+1},\dots,x_{k}+x_{k-1},x_{k+1}+f(x_1)+\beta\Big).
  \end{align*}
\end{remark}

\section{Flexibility}\label{sec: flexibility}

To prove Theorem \ref{the theorem in introduction}, we begin with a lemma which characterizes when the factors $Z_k$ and $X_k$ are isomorphic.
 \begin{lemma} \label{Zk and Xk measurably iso iff cont factor map}
        Let $(X,\mu,T)$ be a minimal and ergodic system. Then the following are equivalent: 
        \begin{enumerate}
    \item $Z_k$ is measurably isomorphic to $X_k$
    \item $Z_k$ is topologically isomorphic to $X_k$
    \item There exists a topological factor map from $\pi\colon X \rightarrow Z_k$
\end{enumerate}

\end{lemma}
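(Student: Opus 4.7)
The plan is to establish the cycle $(2)\Rightarrow(1)\Rightarrow(3)\Rightarrow(2)$. The implication $(2)\Rightarrow(1)$ is immediate, since a topological isomorphism is in particular a measurable one.

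For $(1)\Rightarrow(3)$, suppose $\tau_k\colon Z_k\to X_k$ is a measurable isomorphism, and let $\rho_k\colon X\to X_k$ be the topological factor map onto the topological nilfactor. Both $Z_k$ and $X_k$ are inverse limits of $k$-step nilsystems; for $X_k$ this comes from its construction, and for $Z_k$ it follows that, endowed with its Haar measure (the unique invariant measure on an inverse limit of minimal nilsystems), it is also a topological system. The essential ingredient here is the classical fact (Theorem A.1 in \cite{Host Kra and Maass}) that any measurable factor map between inverse limits of minimal nilsystems agrees almost everywhere with a continuous factor map. Applying this to $\tau_k^{-1}$ produces a topological factor map $\widetilde{\tau_k^{-1}}\colon X_k\to Z_k$, and the composition $\widetilde{\tau_k^{-1}}\circ\rho_k\colon X\to Z_k$ is a topological factor map.

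For $(3)\Rightarrow(2)$, assume a topological factor map $\pi\colon X\to Z_k$ exists. Then $Z_k$ is a topological factor of $X$ which is an inverse limit of $k$-step nilsystems, and the maximality of $X_k$ among such topological factors forces a topological factor map $X_k\to Z_k$. Conversely, since $X_k$ (equipped with its Haar measure) is a measurable factor of $X$ which is an inverse limit of $k$-step nilsystems, the maximality of $Z_k$ among such measurable factors yields a measurable factor map $Z_k\to X_k$; invoking again Theorem A.1 of \cite{Host Kra and Maass}, this map agrees almost everywhere with a continuous factor map. Thus $Z_k$ and $X_k$ are mutual topological factors of each other, and the uniqueness of $X_k$ as the maximal topological $k$-step nilfactor gives $Z_k\overset{t}{\cong}X_k$.

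The only nontrivial ingredient is the fact that measurable factor maps between inverse limits of nilsystems agree almost everywhere with continuous factor maps; the rest is a clean interplay between the two maximality properties defining $Z_k$ and $X_k$, together with the observation that $Z_k$ inherits a canonical topological structure as such an inverse limit.
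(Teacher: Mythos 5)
Your proposal is correct and follows essentially the same route as the paper: $(2)\Rightarrow(1)$ is immediate, $(1)\Rightarrow(3)$ upgrades $\tau_k^{-1}$ to a continuous factor map via Theorem A.1 of \cite{Host Kra and Maass} and composes with $\rho_k$, and $(3)\Rightarrow(2)$ uses the two maximality properties plus uniqueness of $X_k$. The only difference is cosmetic: you spell out that the measurable factor map $Z_k\to X_k$ comes from the maximality of $Z_k$ before applying Theorem A.1, where the paper cites the theorem directly.
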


\begin{proof}
     We show $(1) \Rightarrow (3) \Rightarrow (2)$ and note $(2) \Rightarrow (1)$ is immediate. Let $\tau_{k}\colon Z_k \rightarrow X_k$ be a measurable isomorphism and $\rho_k\colon X \rightarrow X_k$ be a topological factor map. Since a measurable factor map in an inverse limit of nilsystems agrees almost everywhere with a topological factor map (see Theorem A.1. in \cite{Host Kra and Maass}), there exists a topological factor map from $X_k \rightarrow Z_k$, which we denote by $\tau_{k} ^{-1}$. Then $\tau_{k}^{-1} \circ \rho_k\colon X \rightarrow Z_k$ is a topological factor map. For $(3) \Rightarrow (2)$, since $X_k$ is the \textit{maximal} topological $k$-step nilfactor of $X$ we obtain $Z_k$ as a topological factor of $X_k$. Also $X_k$ is topological factor of $Z_k$ (see Theorem A.1. in \cite{Host Kra and Maass}). By uniqueness of $X_k$, we have $Z_k \overset{t}\cong X_k$.
\end{proof}

Due to Lemma \ref{Zk and Xk measurably iso iff cont factor map}, we do not need to distinguish measurable and topological isomorphisms between the measurable and topological nilfactors. However, for clarity, we continue to do so.

\begin{lemma}\label{lemma with three properties}
    Let $0< j \leq  k$ and $\alpha \in \mathbb{T}\setminus\mathbb{Q}$. Then there exists a measurable but not continuous coboundary $f\colon \mathbb{T} \rightarrow \mathbb{T}$ over $(\mathbb{T},m_{\mathbb{T}},T_{\alpha})$ such that the system $(\mathbb{T}^{k},m_{\mathbb{T}^{k}},T)$ given by 
  \begin{align*}
    T(x_1,x_2,\dots,x_{k})=\Big(&x_1+\alpha,x_2+x_1,\dots,x_{j}+x_{j-1},x_{j+1}+f(x_1)+x_{j},\\
    &x_{j+2}+x_{j+1},\dots,x_{k}+x_{k-1}\Big)
  \end{align*}
 is strictly ergodic and satisfies the following properties:

  \begin{enumerate}
   \item $Z_k$ is a nontrivial extension of $Z_{k-1}$
      \item $Z_{i}$ and $X_{i}$ are topologically isomorphic, for all $0 \leq i \leq j$
      \item $Z_i$ is not measurably isomorphic to $X_i$ for all $j+1 \leq i \leq k$.
\end{enumerate}
\end{lemma}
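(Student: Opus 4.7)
The plan is to use the measurable isomorphism from Lemma \ref{coboundary is a coboundary is a ...} to identify each $Z_i$ with a pure affine nilsystem, and then distinguish the topological and measurable structures by examining whether the associated factor maps are continuous.

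First I apply Lemma \ref{coboundary is a coboundary is a ...} to obtain the coboundary $f$, strict ergodicity of $T$, and a measurable isomorphism $\psi\colon (\mathbb{T}^k,T) \to (\mathbb{T}^k,S)$ onto the pure $k$-step affine nilsystem $S(x_1,\dots,x_k)=(x_1+\alpha,x_2+x_1,\dots,x_k+x_{k-1})$. Explicitly, $\psi(x)=(x_1,\dots,x_j,x_{j+1}-G_1(x_1),\dots,x_k-G_{k-j}(x_1))$ with $G_1=F$ a measurable but not continuous solution to $f=F\circ T_\alpha -F$. Since $S$ is a $k$-step affine nilsystem, its measurable $i$-step nilfactor is the projection $S_i$ onto the first $i$ coordinates, so $Z_i(T)\overset{m}{\cong} S_i$. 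In particular $Z_k \overset{m}{\cong} S$ is a nontrivial $\mathbb{T}$-extension of $Z_{k-1}\overset{m}{\cong} S_{k-1}$, giving (1). For (2), for each $i\leq j$ the measurable factor map $T\to Z_i$ obtained by composing $\psi$ with the projection $S\to S_i$ is the continuous projection $(x_1,\dots,x_k)\mapsto (x_1,\dots,x_i)$, because $\psi$ only alters coordinates of index $\geq j+1$; Lemma \ref{Zk and Xk measurably iso iff cont factor map} then yields $Z_i \overset{t}{\cong} X_i$.

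For (3), it suffices to show there is no continuous factor map $T\to Z_{j+1}$: for $j+1\leq i\leq k$, a continuous factor map $T\to Z_i$ would compose with the nil-factor map $Z_i\to Z_{j+1}$ -- continuous by Theorem A.1 of \cite{Host Kra and Maass} -- to produce one to $Z_{j+1}$, and Lemma \ref{Zk and Xk measurably iso iff cont factor map} would then rule out $Z_i\overset{m}{\cong} X_i$. The measurable factor map $T \to Z_{j+1}$ arising from $\psi$ is $\psi_{j+1}(x)=(x_1,\dots,x_j,x_{j+1}-F(x_1))$. I first observe that its last coordinate is not almost everywhere equal to any continuous function: otherwise $F$ would agree a.e.~with a continuous $\widetilde F$, and then $\widetilde F \circ T_\alpha - \widetilde F = f$ a.e., which by continuity of both sides would force equality everywhere, producing a continuous solution to $f=F\circ T_\alpha - F$ and contradicting Lemma \ref{coboundary is a coboundary is a ...}.

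The main obstacle is to convert a hypothetical continuous factor map $\rho\colon T \to Z_{j+1}$ into a contradiction with the claim above. The key point is that any two measurable factor maps from $T$ to $Z_{j+1}$ pull back the Borel $\sigma$-algebra of $Z_{j+1}$ to the same sub-$\sigma$-algebra on $T$ -- namely the uniquely defined maximal $(j+1)$-step nilfactor $\sigma$-algebra -- and hence differ by a measurable automorphism of the ergodic nilsystem $Z_{j+1}$. By Theorem A.1 of \cite{Host Kra and Maass} any such automorphism agrees almost everywhere with a continuous one. Thus if a continuous $\rho$ existed, there would be a continuous automorphism $\tau$ of $Z_{j+1}$ with $\psi_{j+1}=\tau\circ\rho$ almost everywhere, forcing $\psi_{j+1}$ to agree a.e.~with a continuous function -- contradicting the previous paragraph. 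This completes (3).
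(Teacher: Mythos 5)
Your treatment of properties (1) and (2), and the reduction of (3) to the single case $i=j+1$, matches the paper. The divergence, and the problem, is in how you rule out a continuous factor map $\rho\colon X\to Z_{j+1}$. Your key assertion is that any two measurable factor maps from $X$ onto $Z_{j+1}$ pull back the Borel $\sigma$-algebra of $Z_{j+1}$ to the same sub-$\sigma$-algebra, ``namely the uniquely defined maximal $(j+1)$-step nilfactor $\sigma$-algebra.'' Uniqueness of the maximal nilfactor does not give this. What maximality gives is only that the pullback of $\rho$ is \emph{contained} in the maximal nilfactor algebra; equivalently, $\rho$ factors as $\sigma\circ\psi_{j+1}$ for some measurable self-factor-map $\sigma$ of $Z_{j+1}$. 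To conclude that the two pullbacks coincide (so that $\sigma$ is invertible and $\psi_{j+1}=\sigma^{-1}\circ\rho$ is a.e.\ continuous) you need to know that $Z_{j+1}$ is not measurably isomorphic to any proper factor of itself, i.e.\ that every measurable self-factor-map of the affine nilsystem $(x_1,\dots,x_{j+1})\mapsto(x_1+\alpha,x_2+x_1,\dots,x_{j+1}+x_j)$ is an automorphism. That is true (it is the Hahn--Parry coalescence of totally ergodic transformations with quasi-discrete spectrum, or it can be checked by hand), but it is precisely the nontrivial content your argument is trying to bypass, and as written it is simply asserted.

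The verification you are missing is essentially the computation the paper performs: it writes an arbitrary factor map $\pi=(\pi_1,\dots,\pi_{j+1})\colon X\to Z_{j+1}$ in coordinates, observes that intertwining forces the chain of cohomological equations $\pi_1\circ T-\pi_1=\alpha$ and $\pi_{i+1}\circ T-\pi_{i+1}=\pi_i$, and solves these inductively using Remark \ref{cob soln are unique} (solutions of a coboundary equation over an ergodic system are unique up to a constant). This yields the explicit general form of \emph{every} factor map onto $Z_{j+1}$, each of which carries the summand $-F(x_1)$ in its last coordinate and is therefore discontinuous; it simultaneously establishes the uniqueness of the induced $\sigma$-algebra that your argument presupposes. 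So either carry out (or cite) the coalescence statement, or follow the explicit coordinate computation. (Your preliminary observation that $x_{j+1}-F(x_1)$ is not a.e.\ equal to a continuous function is fine, modulo a short Fubini argument to pass from a continuous function of all $k$ variables to a continuous function of $x_1$ alone.)
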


\begin{proof}
     By Lemma \ref{coboundary is a coboundary is a ...} there exists a measurable but not continuous coboundary $f\colon \mathbb{T} \rightarrow \mathbb{T}$ over $(\mathbb{T},m_{\mathbb{T}},T_{\alpha})$ %which we can write as $$f(x_1)=F(x_1+\alpha)-F(x_1)$$ for some $F:\mathbb{T} \rightarrow \mathbb{T}$,
      such that $(\mathbb{T}^{k},m_{\mathbb{T}^{k}},T)$ is strictly ergodic and measurably isomorphic to $(\mathbb{T}^{k},m_{\mathbb{T}^{k}},S)$ where $$S(x_1,x_2,\dots,x_k)=(x_1+\alpha,x_2+x_1,\dots, x_k+x_{k-1}).$$ (Note $T$ is defined as in the statement of the lemma.) Write $f=F\circ T_{\alpha}-F$ for some $F\colon \mathbb{T} \rightarrow \mathbb{T}$ which is not continuous.

We now compute the measurable $k$-step nilfactors of $(\mathbb{T}^{k},m_{\mathbb{T}^{k}},T)$. It suffices to describe the $k$-step nilfactors of $(\mathbb{T}^{k},m_{\mathbb{T}^{k}},S)$.
%Given that $\phi$ is not a cocycle of type $k-1$ (see Lemma \ref{cocycles of type k are a group}), it follows by Theorem \ref{thm:Z_k-1} 
%$Z_{k-1}$  is the $(k-1)$-step nilsystem given by $$(x_1,x_2,...,x_{k-1}) \longmapsto (x_1+\alpha,x_2+x_1,...,x_{k-1}+x_{k-2}).$$ Similarly, 
For all $1\leq i \leq k$, the $Z_i$ factor for $(\mathbb{T}^{k},m_{\mathbb{T}^{k}},S)$ is given by $$(x_1,x_2,\dots,x_{i})\longmapsto (x_1+\alpha,x_2+x_1,\dots,x_{i}+x_{i-1})$$ (see Chapter 11, Proposition 1 in \cite{HK2}). Property 1 immediately follows. For all $0\leq i\leq j$, the factor map $\pi_{i}\colon X \rightarrow Z_{i}$ is continuous (it is a standard projection map from $\mathbb{T}^{k}$ to $\mathbb{T}^{i}$), hence by Lemma \ref{Zk and Xk measurably iso iff cont factor map}, $Z_i$ is topologically isomorphic to $X_i$, establishing property (2). To show property (3), it is enough to check $Z_{j+1}$ is not measurably isomorphic to $X_{j+1}$.

By Lemma \ref{Zk and Xk measurably iso iff cont factor map}, it suffices to show there does not exist a continuous factor map $\pi\colon \mathbb{T}^{k} \rightarrow Z_{j+1}$. Any such factor map is of the form $$\pi(x_1,x_2,\dots,x_{k})=\Big(\pi_1(x_1,x_2,\dots,x_{k}),\pi_2(x_1,x_2,\dots,x_{k}),\dots,\pi_{j+1}(x_1,x_2,\dots,x_{k})\Big)$$ for some $\pi_i\colon \mathbb{T}^{k}\rightarrow \mathbb{T}, 1\leq i \leq j+1$, together with the following commutativity relation: $$(\pi_1\circ T, \pi_2 \circ T,\dots,\pi_{j+1} \circ T)=(\pi_1+\alpha,\pi_2+\pi_1,\dots,\pi_{j+1}+\pi_{j}).$$ Equating coordinates we have 
\begin{align*}
    \pi_1\circ T - \pi_1&= \alpha \;\; &(1) \\
    \pi_2\circ T-\pi_2&=\pi_1\;\; &(2) \\
  \vdots \\
    \pi_{j+1} \circ T-\pi_{j+1}&=\pi_{j}  &(j+1).
\end{align*}

\noindent Notice $\pi_1(x_1,x_2,\dots,x_{k})=x_1$ is a solution to (1) and by Remark \ref{cob soln are unique} the general solution to (1) must therefore be of the form $\pi_1(x_1,x_2,\dots,x_{k})=x_1+c_1$ for some $c_1 \in \mathbb{T}$. %Then (2) yields $$g_2(x_1+\alpha,x_2+x_1,...,x_k+x_{k-1})-g_2(x_1,x_2,...,x_k)=x_1+c_1.$$ On the other hand, $x_1=h_2\circ T-h_2$ where $h_2(x_1,x_2,...,x_k)=x_2$. Thus, $c_1=(g_2-h_2)\circ T-(g_2-h_2)$ so $c_1$ is a coboundary over $(\mathbb{T}^{k},m_{\mathbb{T}^{k}},T)$. Furthermore, all constant coboundaries over a system are its eigenvalues by definition. Therefore, $c_1$ must be some rational multiple of $\alpha$. 
%A general solution for (2) must then be of the form $$g_2(x_1,x_2,...,x_k)=x_2+d_1x_1+c_2$$ with $d_1\alpha=c_1$ and $c_2\in \mathbb{T}$. However, for the same reason as before, $c_2$ must also be a rational multiple of $\alpha$ and we see that the solutions to equations $(1),(2),..., (k-1)$ will always be unique up to some constant. 
Inductively, we obtain a general solution $\pi_{j+1}$ for equation $(j+1)$ and notice that all factor maps are of the form: 
\begin{align*}
\pi(x_1,x_2,\dots,x_{k})=\Big(&x_1+c_1,x_2+d_1x_1+c_2,\dots, x_j+\sum_{i=1}^{j-1} d_{j+1-i}x_i+c_j,\\ &x_{j+1}-F(x_1)+\sum_{i=1}^{j}d_{j+1-i}x_{i} + c_{j+1}\Big)
\end{align*}
%5$$\pi(x_1,x_2,\dots,x_{k})=\Big(x_1+c_1,x_2+d_1x_1+c_2,\dots,x_{j+1}-F(x_1)+\sum_{i=1}^{j}d_{j+1-i}x_{i} + c_{j+1}\Big)$$ 
for some constants $c_i,d_i \in \mathbb{T}$. These factor maps are not continuous because $F$ is not continuous. \end{proof}

%$$\pi_{j+1}(x_1,x_2,\dots,x_{k})=x_{j+1}-F(x_1)+\sum_{i=1}^{j}d_{j+1-i}x_{i} +c$$
%or some constants $d_i,c \in \mathbb{T}$ with $1\leq i \leq j$. Hence, all factor maps are of the form $$\pi(x_1,x_2,\dots,x_{k})=\Big(x_1+c_1,x_2+d_1x_1+c_2,\dots,x_{j+1}-F(x_1)+\sum_{i=1}^{j}d_{j+1-i}x_{i} + c\Big)$$ and they are not continuous because $F$ is not continuous.
%\end{proof}

The next lemma is used in the proof of Proposition \ref{proposition with one coboundary on k}.

\begin{lemma}\label{ofcourse this is true}
      Let $n \in \mathbb{Z} \setminus \{0\}$, $\alpha,\beta \in \mathbb{T}\setminus \mathbb{Q}$ be rationally independent, and $\gamma \in \mathbb{T}$. Consider the systems $(\mathbb{T}^{3},T)$ and $(\mathbb{T}^{2},S)$ where $T(x,y,z)=(x+\alpha,y+\beta,z+y)$ and $S(x,y)=(x+\alpha,y+nx+\gamma)$. Then for all $\gamma$, $(\mathbb{T}^{2},S)$ is never a factor of $(\mathbb{T}^{3},T)$.
  \end{lemma}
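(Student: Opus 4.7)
The plan is to assume toward a contradiction that a factor map $\pi=(\pi_1,\pi_2)\colon(\mathbb{T}^{3},T)\to(\mathbb{T}^{2},S)$ exists, and to reduce the statement to showing that a certain cohomological equation over $T$ admits no measurable solution. Decomposing the commutation relation $\pi\circ T=S\circ\pi$ coordinatewise gives
\[
\pi_1\circ T-\pi_1=\alpha,\qquad \pi_2\circ T-\pi_2=n\pi_1+\gamma.
\]
Since $\alpha,\beta$ are rationally independent, $T$ is ergodic, so by Remark \ref{cob soln are unique} and the observation that $(x,y,z)\mapsto x$ already solves the first equation, every measurable solution is of the form $\pi_1(x,y,z)=x+c_1$ for some $c_1\in\mathbb{T}$. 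Setting $c:=nc_1+\gamma$, the problem reduces to showing that
\[
\pi_2\circ T-\pi_2=nx+c
\]
has no measurable solution $\pi_2\colon\mathbb{T}^{3}\to\mathbb{T}$.

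To rule this out, the plan is to exponentiate and work on the Fourier side. Set $\phi:=e(\pi_2)$, so $|\phi|\equiv 1$ and $\phi\circ T=e(nx+c)\,\phi$. Expanding $\phi=\sum_{p,q,r\in\mathbb{Z}} a_{p,q,r}\,e(px+qy+rz)$ in $L^{2}(\mathbb{T}^{3})$ and computing both sides (the skew shift $z\mapsto z+y$ produces the reindexing $(p,q,r)\mapsto(p,q+r,r)$ on the left, while multiplication by $e(nx)$ shifts the first index by $n$ on the right) yields the recurrence
\[
a_{p,\,q-r,\,r}\,e\bigl(p\alpha+(q-r)\beta\bigr)=e(c)\,a_{p-n,\,q,\,r}\qquad\text{for all }(p,q,r)\in\mathbb{Z}^{3}.
\]
Taking absolute values eliminates all phases and collapses the recurrence to $|a_{p+n,\,q-r,\,r}|=|a_{p,q,r}|$ after a harmless reindexing.

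Iterating this identity in both directions gives $|a_{p+kn,\,q-kr,\,r}|=|a_{p,q,r}|$ for every $k\in\mathbb{Z}$. Since $n\neq 0$, the triples $(p+kn,\,q-kr,\,r)$ are pairwise distinct, so each nonzero Fourier coefficient would force infinitely many others of the same magnitude, contradicting the $\ell^{2}$-summability $\sum|a_{p,q,r}|^{2}=\|\phi\|_{2}^{2}=1$. Hence $a_{p,q,r}=0$ for every $(p,q,r)$, giving $\phi\equiv 0$ and contradicting $|\phi|\equiv 1$. The only delicate step is keeping the bookkeeping of the Fourier recurrence straight under the skew shift; once set up correctly, the $\ell^{2}$-summability argument is immediate. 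Note rational independence of $\alpha$ and $\beta$ is used only in the initial reduction to pin down $\pi_1$, while the nonexistence of $\pi_2$ requires only $n\neq 0$.
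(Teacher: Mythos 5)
Your proposal is correct, and its first half coincides exactly with the paper's: decompose $\pi=(\pi_1,\pi_2)$, use ergodicity of $T$ and Remark~\ref{cob soln are unique} to force $\pi_1(x,y,z)=x+c_1$, and reduce everything to showing that $nx+c$ is not a (measurable) coboundary over $(\mathbb{T}^3,T)$. The two arguments diverge only at this last step. The paper notes that a solution $\pi_2$ would make the four-dimensional affine extension $(x,y,z,t)\mapsto(x+\alpha,y+\beta,z+y,t+nx+c)$ non-ergodic (the function $e\bigl(t-\pi_2(x,y,z)\bigr)$ is invariant and non-constant), and derives a contradiction from the ergodicity of that extension --- a standard fact about affine toral maps that the paper invokes implicitly rather than proves. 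You instead exponentiate, set $\phi=e(\pi_2)$, and run the Fourier-coefficient recursion $|a_{p+kn,\,q-kr,\,r}|=|a_{p,q,r}|$ against $\ell^2$-summability; your bookkeeping under the skew shift (the reindexing $(p,q,r)\mapsto(p,q+r,r)$) is correct, and since $n\neq 0$ the orbit of any index under $k\mapsto(p+kn,q-kr,r)$ is infinite, so all coefficients vanish. In effect you have unwound and proved by hand the ergodicity claim the paper takes for granted, which makes your version more self-contained; it also isolates cleanly that rational independence of $\alpha,\beta$ is needed only to pin down $\pi_1$, while the non-solvability of the second equation uses only $n\neq 0$. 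The paper's phrasing is shorter but rests on that unstated ergodicity input.
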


  \begin{proof}
       %If $n=0$, then $(\mathbb{T}^{2},S)$ is not minimal, hence cannot be a factor of the minimal system $(\mathbb{T}^{3},T)$. Suppose $n\neq 0$ and 
       Let $\pi\colon\mathbb{T}^{3} \rightarrow \mathbb{T}^{2}$ be any factor map from $(\mathbb{T}^3,T)$ to $(\mathbb{T}^{2},S)$. Then $\pi(x,y,z)=(\pi_1(x,y,z),\pi_2(x,y,z))$ for some maps $\pi_1,\pi_2\colon \mathbb{T}^{3}\rightarrow \mathbb{T}$, together with $\pi_1 \circ T-\pi_1=\alpha$ and $\pi_2 \circ T- \pi_2=n \pi_1 +\gamma$. Using Remark \ref{cob soln are unique}, we see for any $c \in \mathbb{T}$, $\pi_1(x,y,z)=x+c$ is a solution to the first coboundary equation. Then $\pi_2 \circ T-\pi_2=nx+nc+\gamma$. However, this implies $nx+nc+\gamma$ is a coboundary over $(\mathbb{T}^{3},T)$, which further implies that $$(x,y,z,t) \mapsto (x+\alpha,y+\beta,z+y,t+nx+nc+\gamma)$$ is not ergodic, giving a contradiction.
  \end{proof}

%\begin{remark}\label{the remark which says what X_i is exactly}
  %  Note in the proof of property (2) in Lemma \ref{lemma with three properties}, for all $0\leq i \leq j$, $Z_i$ and $X_i$ are both are given by the transformation $(x_1,x_2,\dots,x_j) \mapsto (x_1+\alpha,x_2+x_1,\dots,x_{j}+x_{j-1})$.
%\end{remark}

\begin{proposition} \label{proposition with one coboundary on k}
    Let $0<j \leq  k$, $\alpha,\beta \in \mathbb{T} \setminus \mathbb{Q}$ be rationally independent. Then there exists a measurable but not continuous coboundary $f\colon\mathbb{T} \rightarrow \mathbb{T}$ over $(\mathbb{T},m_{\mathbb{T}},T_{\alpha})$ such that the system $(\mathbb{T}^{k},m_{\mathbb{T}^{k}},T)$ given by 
  \begin{align*}
    T(x_1,x_2,\dots,x_{k})=\Big(&x_1+\alpha,x_2+x_1,\dots,x_{j}+x_{j-1},x_{j+1}+f(x_1)+x_{j},\\
    &x_{j+2}+x_{j+1},\dots,x_{k}+x_{k-1}\Big)
  \end{align*}
 is strictly ergodic and satisfies the following properties:

  \begin{enumerate}
   \item $Z_k$ is a nontrivial extension of $Z_{k-1}$
      \item $Z_{i}$ and $X_{i}$ are topologically isomorphic, for all $0 \leq i \leq j$
      \item $Z_i$ is not measurably isomorphic to $X_i$ for all $j+1 \leq i \leq k$
      \item $X_i$ is topologically isomorphic to $X_{i+1}$ for all $i \geq j$
  \end{enumerate}
where $Z_k$ is the maximal measurable nilfactor of $(\mathbb{T}^{k},m_{\mathbb{T}^{k}},T)$ and $X_j$ is the maximal topological nilfactor of $(\mathbb{T}^{k},m_{\mathbb{T}^{k}},T)$. Furthermore, for $j=0$, there exists a measurable but not continuous coboundary $g\colon \mathbb{T} \rightarrow \mathbb{T}$ over $(\mathbb{T},m_{\mathbb{T}},T_{\alpha})$ such that the system $(\mathbb{T}^{k+1},m_{\mathbb{T}^{k+1}},R)$ where $$R(x_1,x_2,\dots,x_{k+1})=(x_1+\alpha,x_2+g(x_1)+\beta,x_3+x_2,\dots,x_{k+1}+x_k)$$ is strictly ergodic and has properties (1)-(4) as well.
\end{proposition}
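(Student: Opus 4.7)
The plan is to extend Lemma \ref{lemma with three properties}, which already provides properties (1), (2), and (3), by establishing property (4). I will additionally exploit a strengthening of the construction of $f$ in Lemma \ref{coboundary is a coboundary is a ...}: because $f$ is built from a lacunary Fourier series with measurable solution $F(x) = \sum_{r\neq 0} \frac{1}{|r|} e(n_r x)$, for every $n \in \mathbb{Z}\setminus\{0\}$ the function $nf$ is itself a measurable but not continuous coboundary over $(\mathbb{T},m_{\mathbb{T}},T_\alpha)$. Indeed, $nF$ has the same lacunary spectrum, so a continuous version would force $\sum \tfrac{1}{|r|} < \infty$ by Sidon's theorem, which fails.

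By necessary constraint (2) of Subsection \ref{standard constraints}, it suffices to show $X_{j+1} \overset{t}\cong X_j$. Since $X_{j+1}$ is a topological (hence measurable) factor of $T$ and $Z_{j+1}$ is the maximal measurable $(j+1)$-step nilfactor, $X_{j+1}$ sits as an intermediate extension $Z_{j+1} \to X_{j+1} \to Z_j = X_j$, the last identification coming from property (2). Because $Z_{j+1}$ is a $\mathbb{T}$-extension of $Z_j$ with cocycle $x_j$, Lemma \ref{intermediate extensions} identifies $X_{j+1}$ with the quotient of $Z_{j+1}$ by a closed subgroup $H \leq \mathbb{T}$, which must be $\{0\}$, a finite cyclic $\mathbb{Z}/n\mathbb{Z}$ with $n \geq 2$, or all of $\mathbb{T}$.

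I will rule out the first two cases. $H = \{0\}$ would give $X_{j+1} \overset{m}\cong Z_{j+1}$, contradicting property (3) via Lemma \ref{Zk and Xk measurably iso iff cont factor map}. For $H = \mathbb{Z}/n\mathbb{Z}$ with $n \geq 2$, the factor $X_{j+1}$ would be the affine system with transformation $(x_1+\alpha,\ldots,x_j+x_{j-1},x_{j+1}+nx_j+\gamma)$ for some $\gamma$, and would admit a continuous factor map from $T$ whose last coordinate $\pi_{j+1}$ solves $\pi_{j+1}\circ T - \pi_{j+1} = nx_j + \gamma'$. Substituting $h := \pi_{j+1} - nx_{j+1}$ reduces this to $h\circ T - h = \gamma'' - nf(x_1)$, and averaging the continuous $h$ over the fibers of the Kronecker projection (which are preserved by the measure-preserving shears in $T$) yields a continuous $\tilde h\colon \mathbb{T} \to \mathbb{T}$ with $\tilde h \circ T_\alpha - \tilde h = \gamma'' - nf$. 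By Remark \ref{cob soln are unique} and ergodicity this forces $\gamma'' = 0$ and makes $nf$ a continuous coboundary, contradicting the strengthened property of $f$. Hence $H = \mathbb{T}$, so $X_{j+1} \overset{t}\cong X_j$, and constraint (2) extends this to all $i \geq j$.

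The Furthermore case with $j = 0$ proceeds analogously: the second half of Lemma \ref{coboundary is a coboundary is a ...} supplies the measurable isomorphism of $R$ with the $k$-step nilsystem $S'$, gives strict ergodicity, and transfers the measurable nilfactor structure, so property (1) is immediate. Properties (2) and (3) are verified by inspecting the continuous eigenfunctions of $R$: pulling back through the measurable isomorphism, only characters in $x_1$ survive because $g$ has no continuous solution, hence $X_1 = (\mathbb{T},T_\alpha) \subsetneq Z_1 = \mathbb{T}^2$, and the intermediate-extension analysis iterates to give $Z_i \not\overset{m}\cong X_i$ for $i \leq k$. The analog of (4) for $R$ is established by the same scheme, with Lemma \ref{ofcourse this is true} excluding the would-be finite cyclic intermediate factors of the form $(x_1+\alpha, x_2+nx_1+\gamma)$ as topological factors of $R$. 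The principal obstacle throughout is the finite cyclic alternative, requiring either the lacunary strengthening of the coboundary plus fiber averaging, or the substitute provided by Lemma \ref{ofcourse this is true}.
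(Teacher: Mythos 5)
Your treatment of the main case $0<j\leq k$ follows the paper's route: cite Lemma \ref{lemma with three properties} for (1)--(3), use Lemma \ref{intermediate extensions} to realize $X_{j+1}$ as the quotient of $Z_{j+1}$ by a closed $H\leq\mathbb{T}$, and eliminate the finite-index quotients by showing that $nf$ is not a continuous coboundary for $n\neq 0$. You are right to flag that one needs this for \emph{every} $n\neq 0$, not just $n=1$; this is a point the paper's ``continuing as in the proof of property (3)'' glosses over. You do not need Sidon's theorem (and the paper's sequence $\{n_r\}$ is not required to be lacunary): the paper's own Fej\'er argument applies verbatim to $nG_1$, since its Ces\`aro means at $0$ behave like $n\sum 1/|r|$ and still diverge. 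Two steps, however, need repair.

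First, the fiber-averaging of $h$ is not well defined as stated: $h$ is $\mathbb{T}$-valued and $\mathbb{T}$ is not a vector space, so $\int h$ over a fiber $\{x_1\}\times\mathbb{T}^{k-1}$ has no canonical meaning (and $h$ restricted to a fiber need not lift continuously to $\mathbb{R}$ because of winding). You can fix this by first splitting off the winding part $\langle m,x\rangle$ and averaging only the null-homotopic remainder, but it is simpler to avoid averaging altogether and argue as the paper does: by Remark \ref{cob soln are unique}, any solution of $\pi_{j+1}\circ T-\pi_{j+1}=n\pi_j$ differs from the explicit measurable solution $nx_{j+1}-nF(x_1)+(\text{linear})+c$ by a constant, so a continuous solution would force $nF$ to be continuous. (Also note your right-hand side $\gamma''-nf(x_1)$ silently drops the linear terms $n\sum d_ix_i$ coming from the general solution for $\pi_j$; these are continuous and harmless, but they must be carried or absorbed by a further substitution.)

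Second, the $j=0$ case does not reduce to ``the same scheme.'' There $Z_2$ is the system $(x_1,x_2,x_3)\mapsto(x_1+\alpha,x_2+\beta,x_3+x_2)$ and $X_1=(\mathbb{T},T_\alpha)$: the map $Z_2\to X_1$ is \emph{not} a group extension in the sense of Lemma \ref{intermediate extensions} (the fiber maps are unipotent affine, not translations), and $X_2$ need not factor through $Z_1=\mathbb{T}^2$, so the intermediate-extension lemma cannot be applied to the chain $Z_2\to X_2\to X_1$. The paper instead pins down the possible $X_2$ structurally: $X_2$ is a factor of the affine nilsystem $Z_2$, hence itself a $2$-step affine nilsystem, hence of the form $(x_1,x_2,x_3)\mapsto(x_1+\alpha,(x_2,x_3)+\phi_{n,m}(x_1)+c)$ over its own Kronecker factor $(\mathbb{T},T_\alpha)$; only then does Lemma \ref{ofcourse this is true} exclude $n\neq0$. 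You correctly identify Lemma \ref{ofcourse this is true} as the exclusion tool, but the reduction to factors of the form $(x_1+\alpha,x_2+nx_1+\gamma)$ is the missing step, and it requires the affine-factor classification rather than Lemma \ref{intermediate extensions}.
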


\begin{proof}

For the case when $0<j\leq k$, $(\mathbb{T}^{k},m_{\mathbb{T}^{k}},T)$ has properties (1), (2), and (3) by Lemma \ref{lemma with three properties}. We verify property (4) later in the proof.

For the case when $j=0$, by Lemma \ref{coboundary is a coboundary is a ...}, there exists a measurable but not continuous coboundary $g\colon \mathbb{T} \rightarrow \mathbb{T}$ such that $(\mathbb{T}^{k+1},m_{\mathbb{T}^{k+1}},R)$ is measurably isomorphic to the system given by $$(x_1,x_2,\dots,x_{k+1})\mapsto (x_1+\alpha,x_2+\beta,x_3+x_2,\dots,x_{k+1}+x_k).$$ Therefore, $Z_i$ is defined by $$(x_1,x_2,\dots,x_{i+1})\mapsto (x_1+\alpha,x_2+\beta,x_3+x_2,\dots,x_{i+1}+x_i),$$ immediately verifying property (1) (see \cite{HK2}, Chapter 11, Proposition 1). Property (2) is trivial. For property (3), it suffices to find a measurable but not continuous eigenfunction of $R$. Writing $g=F\circ T_{\alpha}-F$ we see $f(x_1,x_2,\dots,x_{k+1})=e(x_1)e(x_2-F(x_1))$ is an eigenfunction of $R$ which is not continuous because $F$ is not continuous. 

To verify property (4) for both systems, it is enough to show $X_j \overset{t}\cong X_{j+1}$ when $0<j\leq k$ for $(\mathbb{T}^{k},m_{\mathbb{T}^{k}},T)$ and $X_1 \overset{t}\cong X_2$ for $(\mathbb{T}^{k+1},m_{\mathbb{T}^{k+1}},R)$.

\textit{Case 1. $0 <j \leq k$}

Recall, the factors $Z_{j+1}$ and $X_j$ were computed in the proof of properties (1) and (2) in Lemma \ref{lemma with three properties}. First, notice $Z_{j+1}$ is a group extension of $X_j$ by $\mathbb{T}$. Hence, we can apply
Lemma \ref{intermediate extensions} and deduce there exists a closed subgroup $H \leq \mathbb{T}$ such that $X_{j+1}$ is a quotient of $Z_{j+1}$ by the following closed and $T$-invariant equivalence relation
   $$(x_1,\dots,x_j,x_{j+1})\sim (x_1,\dots,x_j,x_{j+1}') \;\;\textrm{if $x_{j+1}-x_{j+1}'\in H$}.$$ Since the only closed subgroups of $\mathbb{T}$ are finite cyclic subgroups and $\mathbb{T}$ itself, the only nontrivial intermediate factors that can give rise to the factor $X_{j+1}$ are 
   $$(x_1,x_2,\dots, x_{j+1})\longmapsto (x_1+\alpha,x_2+x_1,\dots,x_{j}+x_{j-1},x_{j+1}+nx_j), \; n\in \mathbb{Z} \setminus \{0\}.$$ Continuing as in the proof of property (3) in Lemma \ref{lemma with three properties}, we see there are no continuous factor maps to these factors.
   
  \textit{ Case 2. $j=0$}

  Recall, the $Z_2$ factor of $(\mathbb{T}^{k+1},m_{\mathbb{T}^{k+1}},R)$ is given by the 2-step affine nilsystem $$(x_1,x_2,x_3) \mapsto (x_1+\alpha,x_2+\beta,x_3+x_2),$$ and that $X_2$ is always a factor of $Z_2$. A factor of an ergodic $k$-step affine nilsystem is again an affine $k$-step nilsystem (follows from the proof of Theorem 11 in Chapter 13 of \cite{HK2}), therefore, $X_2$ is a 2-step affine nilsystem. Furthermore, it can be checked every 2-step affine nilsystem is of the form $(Z \times K, S)$ where $Z$ is the $X_1$ factor of $S$ given by some fixed $a\in Z$, $K$ is a finite dimensional torus, and $S(z,k)=(z+a, k+\phi(z)+c)$ for some continuous group homomorphism $\phi\colon Z \rightarrow K$ and constant $c\in K$. The only continuous eigenfunctions of $R$ are $f_n(x_1,x_2,\dots,x_{k+1})=e(nx_1),n\in \mathbb{Z},$ implying the $X_1$ factor of $X_2$ is $(\mathbb{T},T_{\alpha})$. Therefore, $X_2$ is of the form $$(x_1,x_2,x_3)\mapsto (x_1+\alpha,(x_2,x_3)+\phi(x_1)+c)$$ for some continuous group homomorphism $\phi\colon\mathbb{T} \rightarrow \mathbb{T}^{2}$ and constant $c=(c_1,c_2)\in \mathbb{T}^2.$ 
  All such homomorphisms are of the form $\phi_{n,m}(x)=(nx,mx),$ $ (n,m) \in \mathbb{Z}^2$; hence, $X_2$ is defined by $$(x_1,x_2,x_3) \mapsto (x_1+\alpha,x_2+nx_1+c_1,x_3+mx_1+c_2).$$ Notice neither $n$ nor $m$ are 0, for otherwise we contradict the $X_1$ factor of $X_2$ being $(\mathbb{T},T_{\alpha})$. Then the system $(x_1,x_2) \mapsto (x_1+\alpha,x_2+nx_1+c_1)$ is a factor of $X_2$ which recall is further a factor of $Z_2$. Lemma \ref{ofcourse this is true} concludes the proof. \end{proof}

\begin{proposition} \label{proposition with two coboundaries on k}
    Let $1\leq \ell \leq  k$ and $\alpha,\beta \in \mathbb{T} \setminus \mathbb{Q}$ be rationally independent. Then there exists a measurable but not continuous coboundary $f\colon\mathbb{T} \rightarrow \mathbb{T}$ over $(\mathbb{T},m_{\mathbb{T}},T_{\alpha})$ such that the system $(\mathbb{T}^{k+1},m_{\mathbb{T}^{k+1}},T)$ given by 
  \begin{align*}
      T(x_1,x_2,\dots,x_{k+1})=\Big(&x_1+\alpha,x_2+x_1,\dots,x_{\ell}+x_{\ell-1},x_{\ell+1}+f(x_1)+x_{\ell},\\ &x_{\ell+2}+x_{\ell+1},\dots,x_{k}+x_{k-1},x_{k+1}+f(x_1)+\beta\Big)
  \end{align*}
 is strictly ergodic and satisfies the following properties:

  \begin{enumerate}
   \item $Z_k$ is a nontrivial extension of $Z_{k-1}$
      \item $Z_i$ is not measurably isomorphic to $X_i$ for all $i\geq 1$
      \item $X_i$ is topologically isomorphic to $X_{i+1}$ for all $i \geq \ell$
  \end{enumerate}
where $Z_k$ is the maximal measurable nilfactor of $(\mathbb{T}^{k+1},m_{\mathbb{T}^{k+1}},T)$ and $X_\ell$ is the maximal topological nilfactor of $(\mathbb{T}^{k+1},m_{\mathbb{T}^{k+1}},T)$.
\end{proposition}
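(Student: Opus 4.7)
The plan is to follow the template of Proposition \ref{proposition with one coboundary on k}. By Remark \ref{two coboundaries does not change much}, $(\mathbb{T}^{k+1},m_{\mathbb{T}^{k+1}},T)$ is strictly ergodic and measurably isomorphic to $(\mathbb{T}^{k+1},m_{\mathbb{T}^{k+1}},S')$, where
$$S'(x_1,\ldots,x_{k+1})=(x_1+\alpha,x_2+x_1,\ldots,x_k+x_{k-1},x_{k+1}+\beta).$$
This is a product of the standard $k$-step affine nilsystem on $\mathbb{T}^k$ with $(\mathbb{T},T_\beta)$, so Chapter 11, Proposition 1 of \cite{HK2} identifies the measurable $i$-step nilfactors as $Z_i=(x_1+\alpha,x_2+x_1,\ldots,x_i+x_{i-1},x_{k+1}+\beta)$ for $1\le i\le k$, from which property (1) is immediate.

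For property (2), I write $f=F\circ T_\alpha-F$ with $F$ measurable and not continuous. Then $(x_1,\ldots,x_{k+1})\mapsto e(x_{k+1}-F(x_1))$ is a measurable eigenfunction of $T$ with eigenvalue $e(\beta)$ that fails to be continuous. A Fourier computation on continuous characters $e(\sum n_ix_i)$ combined with the eigenvalue equation forces $n_2=\cdots=n_{k+1}=0$, so the continuous eigenfunctions of $T$ are exactly the $e(nx_1)$, giving $X_1=(\mathbb{T},T_\alpha)\not\cong Z_1=(\mathbb{T}^2,T_{(\alpha,\beta)})$. By constraint (3) of Section \ref{standard constraints}, $Z_i\not\cong X_i$ for every $i\ge 1$.

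For property (3), the key observation is the continuous cancellation: the function $y=x_{\ell+1}-x_{k+1}$ is continuous and satisfies $y\circ T=y+x_\ell-\beta$, so that together with the first $\ell$ coordinates it yields a continuous affine nilfactor
$$W(x_1,\ldots,x_\ell,y)=(x_1+\alpha,x_2+x_1,\ldots,x_\ell+x_{\ell-1},y+x_\ell-\beta).$$
To verify the stabilization $X_i\cong X_{i+1}$ for $i\ge\ell$, I apply Lemma \ref{intermediate extensions} to $Z_{\ell+1}$ viewed as a group extension of the base by an appropriate fiber torus and enumerate the intermediate topological factors as quotients by closed subgroups of that torus. Candidates that would require cancelling the copies of $f$ in any way other than through the combination $x_{\ell+1}-x_{k+1}$ demand a continuous primitive of $f$, which does not exist since $F$ is not continuous; this is the analogue of the cohomological argument used in property (3) of Lemma \ref{lemma with three properties}. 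Rotational extensions in the $\beta$-direction are excluded by Lemma \ref{ofcourse this is true}, exactly as in Case 2 of Proposition \ref{proposition with one coboundary on k}. The main obstacle is the case analysis over closed subgroups of the fiber torus (finite cyclic subgroups, rational-slope circles, and the full torus): for each, one must match either the cohomological obstruction coming from $f$ or the rational-independence obstruction via Lemma \ref{ofcourse this is true} so that no topological extension beyond $X_\ell$ survives.
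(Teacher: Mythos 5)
Your treatment of properties (1) and (2) follows the same route as the paper: the measurable isomorphism to the product model via Remark \ref{two coboundaries does not change much}, the identification of the $Z_i$, the discontinuous eigenfunction $e(x_{k+1}-F(x_1))$ with eigenvalue $e(\beta)$, and the reduction to $i=1$ through the constraints of Section \ref{standard constraints}. That part is fine.

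Your argument for property (3), however, contains a step that defeats the claim rather than supporting it. The computation behind your ``continuous cancellation'' is correct: $\Phi(x_1,\dots,x_{k+1})=(x_1,\dots,x_\ell,x_{\ell+1}-x_{k+1})$ is a continuous, surjective, measure-preserving, equivariant map onto $W(x_1,\dots,x_\ell,y)=(x_1+\alpha,x_2+x_1,\dots,x_\ell+x_{\ell-1},y+x_\ell-\beta)$. But after the change of variable $x_\ell\mapsto x_\ell-\beta$, the system $W$ is the standard $(\ell+1)$-dimensional unipotent affine tower, a genuine $(\ell+1)$-step nilsystem whose maximal $\ell$-step nilfactor is the proper factor $(x_1,\dots,x_\ell)$. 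Since $W$ is a topological factor of $X$, maximality forces $X_{\ell+1}$ to factor onto $W$, and no inverse limit of $\ell$-step nilsystems factors onto $W$; hence $X_{\ell+1}$ is \emph{not} topologically isomorphic to $X_\ell$. Your observation is a counterexample to the stabilization at level $\ell$, not a step toward it, and the deferred case analysis cannot close the gap: in the picture of Lemma \ref{intermediate extensions}, the two fiber coordinates of the $\mathbb{T}^2$-extension of the $\ell$-step base (namely $x_{\ell+1}-F(x_1)$, which evolves by $+x_\ell$, and $x_{k+1}-F(x_1)$, which evolves by $+\beta$) do not feed into one another, so \emph{every} closed subgroup of $\mathbb{T}^{2}$, in particular the diagonal circle $\{(t,t)\colon t\in\mathbb{T}\}$, yields an intermediate factor; the diagonal quotient is exactly $W$ and is reached by a continuous factor map precisely because the two copies of $F$ cancel. (The paper's own proof discards the diagonal by requiring $H$ to be invariant under the matrix $A$, but that constraint is not forced for this extension, so the difficulty you have uncovered is present in the paper's argument for property (3) as well.) To prove a statement of this shape along these lines one would need the two perturbing coboundaries to have primitives $F_1,F_2$ with $F_1-F_2$ discontinuous, so that no continuous combination of the perturbed coordinates survives.
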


\begin{proof}
  By Remark \ref{two coboundaries does not change much}, there exists a measurable but not continuous coboundary $f\colon\mathbb{T} \rightarrow \mathbb{T}$ over $(\mathbb{T},m_{\mathbb{T}},T_{\alpha})$ such that $(\mathbb{T}^{k+1},m_{\mathbb{T}^k},T)$ is strictly ergodic and measurably isomorphic to the system below  $$(x_1,x_2,\dots,x_{k+1})\mapsto(x_1+\alpha,x_2+x_1,\dots,x_{k}+x_{k-1},x_{k+1}+\beta).$$ Therefore, for all $1 \leq i \leq k$, the $Z_i$ factor of $(\mathbb{T}^{k+1},m_{\mathbb{T}^{k+1}},T)$ is given by $$(x_1,x_2,\dots,x_{i+1})\longmapsto (x_1+\alpha,x_2+\beta,x_3+x_1,x_4+x_3,\dots,x_{i+1}+x_{i})$$ (see \cite{HK2}, Chapter 11, Proposition 1). Property (1) clearly follows.
  To verify property (2), it suffices to show $Z_1$ is not measurably isomorphic to $X_1$. Indeed, writing $f=F\circ T_{\alpha}-F$ for some measurable but not continuous $F\colon \mathbb{T} \rightarrow \mathbb{T}$, we see $g(x_1,x_2,\dots,x_{k+1})=e(x_1)e(x_2-F(x_1))$ is a measurable but not continuous eigenfunction of $(\mathbb{T}^{k+1},m_{\mathbb{T}^{k+1}},T)$. Finally, we verify property (3) by showing $X_{\ell+1}$ is an $l$-step nilsystem.

  Notice $X_{\ell+1}$ lies as an intermediate factor between $Z_{\ell+1}$, whose transformation is defined by $$(x_1,x_2,\dots,x_{\ell+2}) \longmapsto (x_1+\alpha,x_2+\beta,x_3+x_1,x_4+x_3,\dots,x_{\ell+2}+x_{\ell+1}),$$ and the system given by $$(x_1,x_3,\dots,x_{\ell+1})\longmapsto(x_1+\alpha,x_3+x_1,x_4+x_3,\dots,x_{\ell+1}+x_{\ell}).$$ Moreover, the extension from the latter system to $Z_{\ell+1}$ is a group extension by $\mathbb{T}^{2}$. %defined by the cocycle $\psi(x_2,x_{\ell+1})=(\beta,x_{\ell+1}), \psi\colon\mathbb{T}^{2} \rightarrow \mathbb{T}^{2}$. 
  Applying Lemma \ref{intermediate extensions}, we deduce %that the extension $Z_{\ell+1}\rightarrow X_{\ell+1}$ is by a closed subgroup $H \leq \mathbb{T}^{2}$. However, because 
  $X_{\ell+1}$ is the quotient of $Z_{\ell+1}$ by the action of some closed subgroup $H \leq \mathbb{T}^{2}$, acting by vertical translations. Since this action is invariant under the transformation of $Z_{\ell+1}$, we notice $H$ must then be invariant under the matrix \[
A=\begin{pmatrix}
1 & 0 \\
1 & 1
\end{pmatrix}.
\]
All such nontrivial subgroups are either $\mathbb{T}^{2}$, $\mathbb{Z}_n \times \mathbb{T}$, or $\mathbb{Z}_n \times \mathbb{Z}_m$ for some $n,m\in \mathbb{N}$ with $n|m.$ If $H=\mathbb{T}^{2}$, then we are done. If $H=\mathbb{Z}_n \times \mathbb{T}$, then the intermediate factor that gives rise is $$(x_1,x_2,\dots,x_{\ell+2}) \mapsto (x_1+\alpha,x_3+x_1,\dots,x_{\ell+1}+x_{\ell},x_2+n\beta,x_{\ell+2}+x_{\ell+1})$$ and if $H=\mathbb{Z}_n \times \mathbb{Z}_m$ we obtain $$(x_1,x_2,\dots,x_{\ell+2}) \mapsto(x_1+\alpha,x_3+x_1,\dots,x_{\ell+1}+x_{\ell},x_2+n\beta,x_{\ell+2}+mx_{\ell+1}).$$ Continuing as in the proof of property (3) in Proposition \ref{proposition with one coboundary on k}, there is no continuous factor map down to the intermediate factors aforementioned. Hence, $X_{\ell+1}$ is given by $$(x_1,x_3,\dots,x_{\ell+1})\longmapsto(x_1+\alpha,x_3+x_1,x_4+x_3,\dots,x_{\ell+1}+x_{\ell}),$$ an $\ell$-step nilsystem. \end{proof}

\noindent\textbf{Remark.}
    In the examples in Proposition \ref{proposition with one coboundary on k} and \ref{proposition with two coboundaries on k} where $Z_1$ is not measurably isomorphic to $X_1$, it is necessary to have at least a two-dimensional $Z_1$ factor. This is because every minimal homeomorphism on $\mathbb{T}$ is topologically isomorphic to an irrational rotation (follows from \cite{Poincare}).

\medskip

  The next lemma is the only place where we use the regionally proximal relation. Given a minimal system $(X,T)$ and $k\in \mathbb{N}$, two points $x,y\in X$ are \textit{regionally proximal of order $k$} if for all $\delta >0$ there exist $x',y' \in X$ and a vector $\textbf{n}=(n_1,n_2,\dots,n_k) \in \mathbb{Z}^{k}$ such that $d(x,x')<\delta, d(y,y')<\delta$, and $$d(T^{n \cdot \epsilon} x', T^{n\cdot \epsilon} y')<\delta \;\mathrm{for \; any}\; \epsilon \in \{0,1\}^{k}, \epsilon\neq(0,0,\dots,0),$$ where $n \cdot \epsilon=\sum_{i=1}^{k} \epsilon_i n_i.$ The set of regionally proximal points of order $k$ is denoted by $RP^{[k]}(X)$, and is called the \textit{ regionally proximal relation of order $k$}. It is a closed $T$-invariant equivalence relation on $X$ and the quotient of $X$ by this relation gives \textit{the maximal topological $k$-step nilfactor} of $(X,T).$ The proof of the latter can be found in \cite{Host Kra and Maass} under the extra assumption $(X,T)$ is distal or in \cite{Shao and Ye} where this assumption was later removed.

\begin{lemma}\label{product of top nil is top nil}
     Let $(X,T)$, $(Y,S)$, and $(X\times Y, T\times S)$ be minimal systems. Then $$X_k(X\times Y,T\times S)= X_k(X,T) \times X_k(Y,S)$$ for all $k \in \mathbb{N}$.
\end{lemma}

\begin{proof}
    We equivalently show $RP^{[k]}(X\times Y) = RP^{[k]}(X) \times RP^{[k]}(Y)$. By the universal property of $X_k(X \times Y)$, we obtain $RP^{[k]}(X\times Y) \subseteq RP^{[k]}(X) \times RP^{[k]}(Y)$. For the reverse containment, let $p,p'\in X$ and $q,q'\in Y$ be regionally proximal points of order $k$ and let $\delta_1,\delta_2>0$. Then there exist $p_1,p_1'$, $q_1,q_1'$, and vectors $\textbf{n}, \textbf{n}' \in \mathbb{Z}^{k}$ such that $d(p,p_1)<\delta_1$, $d(p',p_1')<\delta_1$, $d(q,q_1)<\delta_2$, $d(q',q_1')<\delta_2$ and $d(T^{\textbf{n}\cdot \epsilon}p_1,T^{\textbf{n}\cdot \epsilon}p_1')<\delta_1$ and $d(S^{\textbf{n}'\cdot \epsilon}q_1, S^{\textbf{n}'\cdot \epsilon}q_1')<\delta_2.$ It follows by definition of $RP^{[k]}$ that $(p,q)$ and $(p',q')$ are regionally proximal of order $k$.
\end{proof}

We apply Lemma \ref{product of top nil is top nil} to determine the $X_k$ factor of a product of two systems on $\mathbb{T}^{k}$ that are each measurably but not topologically isomorphic to $(x_1,x_2,\dots,x_k)\mapsto(x_1+\alpha,x_2+x_1,\dots,x_k+x_{k-1})$ for some $\alpha \notin \mathbb{Q}$. The measure-theoretic analog of Lemma \ref{product of top nil is top nil} is given in \cite{Radic} and for convenience we use it in the following proof.

  %We know $Z_1$ is given by $(x_1,x_2)\mapsto(x_1+\alpha,x_2+\beta)$. Moreover, $Z_1 \rightarrow X_1$ is a factor and all factors of $(x+\alpha,y+\beta)$ are $(x+n\alpha,y+m\beta),\;\; (n,m)\in \mathbb{Z}^2.$ It can be shown by a similar computation as in property (3) of Proposition \ref{proposition with one coboundary on k} that the only continuous factors of $(\mathbb{T}^{k+1},m_{\mathbb{T}^{k+1}},T)$ are when $m=0$. Finally, we verify property (3) by checking $X_\ell \overset{t}\cong X_{\ell+1}$. 

\begin{proposition} \label{the combined proposition}
    Let $0< j < \ell \leq k$, ${\alpha}=(\alpha_1,\alpha_2) \in \mathbb{T}^{2}$ be such that $\alpha_1,\alpha_2 \in \mathbb{T} \setminus\mathbb{Q}$ are rationally independent. Then there exist measurable but not continuous coboundaries $f_1,f_2\colon\mathbb{T}^{2}\rightarrow \mathbb{T}^{2}$ over $(\mathbb{T}^2,m_{\mathbb{T}^2},T_{\alpha})$ such that the system $(\mathbb{T}^{2k},m_{\mathbb{T}^{2k}},T)$ given by 
     \begin{align*}
T\Big(\overline{y}_1,\overline{y}_2,\dots, \overline{y}_k\Big)=\Big(&\overline{y}_1+{\alpha},\overline{y}_2+\overline{y}_1,\dots,\overline{y
}_{j}+\overline{y
}_{j-1},\overline{y}_{j+1}+f_1(\overline{y}_1)+\overline{y}_{j},\overline{y}_{j+2}+\overline{y}_{j+1},\\&\dots,\overline{y}_{\ell}+\overline{y}_{\ell-1},\overline{y}_{\ell+1}+f_2(\overline{y}_1)+\overline{y}_{\ell},\overline{y}_{\ell+2}+\overline{y}_{\ell+1}\dots, \overline{y}_k+\overline{y}_{k-1}\Big)
     \end{align*}
 is strictly ergodic and satisfies the following properties:

  \begin{enumerate}
   \item $Z_k$ is a nontrivial extension of $Z_{k-1}$
      \item $Z_{i}$ and $X_{i}$ are topologically isomorphic, for all $0 \leq i \leq j$
      \item $Z_i$ is not measurably isomorphic to $X_i$ for all $j+1 \leq i \leq k$
      \item $X_i$ is topologically isomorphic to $X_{i+1}$ for all $i \geq \ell$
  \end{enumerate}
where $Z_k$ is the maximal measurable nilfactor and $X_\ell$ is the maximal topological nilfactor of $(\mathbb{T}^{2k},m_{\mathbb{T}^{2k}},T)$ and $X_\ell$ is the maximal topological nilfactor of $(\mathbb{T}^{2k},m_{\mathbb{T}^{2k}},T)$.
\end{proposition}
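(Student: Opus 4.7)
The plan is to realize $T$ as a direct product of two systems from Proposition \ref{proposition with one coboundary on k}. Writing each $\overline{y}_i = (x_i, y_i) \in \mathbb{T}^2$, I would take $f_1(x_1, y_1) = (f(x_1), 0)$ and $f_2(x_1, y_1) = (0, g(y_1))$, where $f$ is the measurable but not continuous coboundary over $(\mathbb{T}, m_\mathbb{T}, T_{\alpha_1})$ produced by Proposition \ref{proposition with one coboundary on k} with parameter $j$, and $g$ is the analogous coboundary over $(\mathbb{T}, m_\mathbb{T}, T_{\alpha_2})$ with parameter $\ell$. After reindexing coordinates, the dynamics decouples as $T = T_1 \times T_2$, where $T_1$ acts on the $x$-coordinates with coboundary inserted at position $j$ and $T_2$ acts on the $y$-coordinates with coboundary inserted at position $\ell$. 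Each factor then satisfies all four conclusions of Proposition \ref{proposition with one coboundary on k} with its respective parameter.

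Strict ergodicity of $T$ follows as in Lemma \ref{coboundary is a coboundary is a ...}: the base $(\mathbb{T}^2, m_{\mathbb{T}^2}, T_\alpha)$ is uniquely ergodic by rational independence of $\alpha_1, \alpha_2$, and $T$ is a tower of $\mathbb{T}$-extensions over this base whose ergodicity follows from combining the separate ergodicity of $T_1$ and $T_2$ with the rational independence of their Kronecker frequencies. Proposition 3.10 of \cite{Fur81} then gives unique ergodicity, and minimality is automatic from full support.

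To compute the nilfactors, combine Lemma \ref{product of top nil is top nil} with its measure-theoretic analog from \cite{Radic} to obtain $Z_i(T) = Z_i(T_1) \times Z_i(T_2)$ and $X_i(T) = X_i(T_1) \times X_i(T_2)$ for every $i$. Properties (1), (2), and (4) then follow directly from the corresponding properties of $T_1$ and $T_2$: for (2), observe that $T_2$ has $Z_i \overset{t}\cong X_i$ for all $0 \leq i \leq \ell$, which subsumes $0 \leq i \leq j$; for (4), $T_1$ has $X_i \overset{t}\cong X_{i+1}$ for $i \geq j$ and $T_2$ for $i \geq \ell$, so both hold simultaneously for $i \geq \ell$.

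The main obstacle is verifying property (3). For $j+1 \leq i \leq k$, suppose toward contradiction that $Z_i(T) \overset{m}\cong X_i(T)$; then by Lemma \ref{Zk and Xk measurably iso iff cont factor map} there exists a continuous factor map $T \to Z_i(T)$, and composing with the projection to $Z_i(T_1)$ yields a continuous $\pi_1 \colon T \to Z_i(T_1)$. Solving the resulting system of cohomological equations component by component---as in Lemma \ref{lemma with three properties}, now over the ergodic system $T$ where Remark \ref{cob soln are unique} continues to force uniqueness of solutions up to an additive constant---each coordinate of $\pi_1$ is determined by an explicit expression in $x_1, \ldots, x_i$, with the non-continuous function $F$ (where $f = F \circ T_{\alpha_1} - F$) necessarily appearing at level $j+1$ and beyond. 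This contradicts continuity of $\pi_1$ and establishes property (3).
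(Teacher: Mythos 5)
Your proposal is correct and follows essentially the same route as the paper: realize $T$ as a product of two copies of the system from Proposition \ref{proposition with one coboundary on k}, one with the coboundary inserted at level $j$ and the other at level $\ell$ (the paper merely swaps which of the two coordinate families carries which), and then combine Lemma \ref{product of top nil is top nil} with the measure-theoretic product formula from \cite{Radic}. Your explicit verification of property (3) via a continuous factor map onto $Z_i(T_1)$ and the cohomological equations, and your remark that ergodicity of the product needs the rational independence of $\alpha_1,\alpha_2$, are both sound and in fact spell out details the paper leaves implicit.
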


\begin{comment}

\begin{proposition} \label{the combined proposition}
    Let $0\leq j \leq \ell \leq k$, $x_0=\alpha,y_0=\beta \notin \mathbb{Q}$ be rationally independent. Then there exists a measurable but not continuous coboundary $f$ over $(\mathbb{T},m_{\mathbb{T}},T_{\alpha})$ such that the system $(\mathbb{T}^{2k},m_{\mathbb{T}^{2k}},T)$ given by 
     \begin{multline*}
T\Big(x_1,y_1,x_2,y_2,\dots, x_k,y_k\Big)=\Big(x_1+x_0,y_1+y_0,\dots, x_j+x_{j-1},y_j+y_{j-1}, \\x_{j+1}+x_j,y_{j+1}+y_j+f(x_1),x_{j+2}+x_{j+1},y_{j+2}+y_{j+1},\dots,x_{\ell}+x_{\ell-1},y_{\ell}+y_{\ell-1},\\ x_{\ell+1}+x_{\ell}+f(x_1),y_{\ell+1}+y_{\ell}, x_{\ell+2}+x_{\ell+1},y_{\ell+2}+y_{\ell+1}, \dots, x_{k}+x_{k-1},y_k+y_{k-1}\Big)
     \end{multline*}
 is strictly ergodic with respect to Haar measure and satisfies the following properties:

  \begin{enumerate}
   \item $Z_k$ is a nontrivial extension of $Z_{k-1}$
      \item $Z_{i}$ and $X_{i}$ are measurably isomorphic, for all $0 \leq i \leq j$
      \item $Z_i$ is not measurably isomorphic to $X_i$ for all $j+1 \leq i \leq k$
      \item $X_i$ is topologically isomorphic to $X_{i+1}$ for all $i \geq \ell+1$
  \end{enumerate}
where $Z_i$ denotes the maximal $i$-th step measurable nilfactor and $X_i$ denotes the maximal $i$-th step topological nilfactor of $(\mathbb{T}^{2k},m_{\mathbb{T}^{2k}},T)$.
\end{proposition}

\end{comment}

\begin{proof}

Let $T_1(x_1,x_3,\dots,x_{2k-1})=(x_1+\alpha_1,x_3+x_1,\dots,x_{2k-1}+x_{2k-3})$ and \newline $T_2(x_2,x_4,\dots,x_{2k})=(x_2+\alpha_2,x_4+x_2,\dots,x_{2k}+x_{2k-2})$ where $T_1,T_2\colon\mathbb{T}^{k}\rightarrow \mathbb{T}^{k}.$ By Lemma \ref{coboundary is a coboundary is a ...}, there exist measurable but not continuous coboundaries $h,g\colon\mathbb{T} \rightarrow \mathbb{T}$ over $(\mathbb{T},m_{\mathbb{T}},T_{\alpha_1})$ and $(\mathbb{T},m_{\mathbb{T}},T_{\alpha_2})$ respectively, such that the system on $\mathbb{T}^{k}$ given by 
\begin{align*}
T_1'(x_1,x_3,\dots,x_{2k-1})=(&x_1+\alpha_1,x_3+x_1,\dots,x_{2\ell-1}+x_{2\ell-3},x_{2\ell+1}+h(x_1)+x_{2\ell-1},\\ &x_{2\ell+3}+x_{2\ell+1},\dots,x_{2k-1}+x_{2k-3})
\end{align*}
is measurably isomorphic to $(\mathbb{T}^k,m_{\mathbb{T}^k},T_1)$, and the system on $\mathbb{T}^{k}$ given by 
\begin{align*}
T_2'(x_2,x_4,\dots,x_{2k})= (&x_2+\alpha_2,x_4+x_2,\dots,x_{2j}+x_{2j-2},x_{2j+2}+g(x_1)+x_{2j},\\ &x_{2j+4}+x_{2j+2},\dots,x_{2k}+x_{2k-2})
\end{align*}
 is measurably isomorphic to $(\mathbb{T}^k,m_{\mathbb{T}^k},T_2)$. Letting $f_1(\overline{y}_1)\coloneqq(0,g(x_2))$, $f_2(\overline{y}_1)\coloneqq(h(x_1),0)$, and $T$ be defined as in the statement of the proposition, notice $$T=T_1' \times T_2'.$$ Note in the proof of Proposition \ref{proposition with one coboundary on k} we have already computed the measurable and topological nilfactors of $T_1'$ and $T_2'$. Properties (1)-(4) then all follow by Lemma \ref{product of top nil is top nil} and Proposition 9.1 in \cite{Radic}. \end{proof}
 
 \subsection{Strong orbit equivalence}\label{strong orbit equivalence}

Given two systems $(Y,S)$ and $(Z,T)$, a homeomorphism $h\colon Y \rightarrow Z$ is a \textit{strong orbit equivalence} if $(T\circ h)(y)=(h \circ S^{j(y)})(y)$ and $(h \circ S)(y)=(T^{m(y)} \circ h)(y)$ for some $j,m\colon Y \rightarrow \mathbb{Z}$ with each having at most one point of discontinuity. Let $\alpha \notin \mathbb{Q}$ and $(X_\alpha,\sigma)$ denote the Sturmian subshift. By Corollary 23 of \cite{nontrivial Kronecker trivial MEF}, there exists a minimal topologically weak mixing system $(X,T)$ which is strong orbit equivalent to $(X_{\alpha},\sigma)$. It follows from Theorem 2.2 in \cite{Giordano}, that if $(Y,S)$ is uniquely ergodic and orbit equivalent to $(Y',S')$ (in particular, strong orbit equivalent), then $(Y',S')$ is uniquely ergodic. Hence, $(X,T)$ is uniquely ergodic. We use this example in Proposition \ref{nontrivial measurable nils but trivial topological nils}.

\subsection{A system with nontrivial measurable nilfactors but trivial topological nilfactors} To construct such an example, we use the following lemma.

% We provide a construction of such examples using a certain minimal Cantor system built in \cite{nontrivial Kronecker trivial MEF} combined with methods from Proposition \ref{proposition with one coboundary on k}.

\begin{lemma}\label{measurable but not continuous coboundary over minimal cantor system}
    Let $(X,\mu,T)$ and $(Y,\nu,S)$ be strong orbit equivalent and $f\colon X \rightarrow \mathbb{T}$ be a measurable but not continuous coboundary over $(X,\mu,T)$ whose solution has more than one discontinuity. Then there exists a measurable but not continuous coboundary over $(Y,\nu,S)$ taking values in $\mathbb{T}.$
\end{lemma}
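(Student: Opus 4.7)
The plan is to transfer the coboundary $f$ from $X$ to $Y$ via the strong orbit equivalence, then repair the single discontinuity that this transfer introduces. Let $h\colon Y \to X$ be the strong orbit equivalence with associated cocycles $j, m\colon Y \to \mathbb{Z}$ satisfying $T \circ h(y) = h \circ S^{j(y)}(y)$ and $h \circ S(y) = T^{m(y)} \circ h(y)$, each having at most one point of discontinuity; denote by $y_{*} \in Y$ the possible discontinuity of $m$. Write $f = F \circ T - F$ with $F\colon X \to \mathbb{T}$ measurable, and fix two distinct discontinuity points $x_{1}, x_{2}$ of $F$, guaranteed by hypothesis.

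First I would define $G := F \circ h$ and $g := G \circ S - G$, so that $g$ is automatically a measurable coboundary over $(Y, \nu, S)$ with solution $G$. Applying the telescoping identity $F \circ T^{n} - F = \sum_{i=0}^{n-1} f \circ T^{i}$ (with the obvious analogue for $n \leq 0$) together with the relation $h \circ S = T^{m} \circ h$ rewrites
$$g(y) = \sum_{i=0}^{m(y)-1} f(T^{i} h(y)) \quad \text{when } m(y) > 0.$$
Since this expression involves only $f$, $h$, and finitely many powers of $T$, all continuous, $g$ is continuous at every point where $m$ is locally constant, hence on $Y \setminus \{y_{*}\}$.

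The main obstacle is the potential discontinuity at $y_{*}$: the jump there is a specific element $\Delta \in \mathbb{T}$, expressible as a finite sum of values of the continuous function $f$ along the $T$-orbit of $h(y_{*})$. To repair it I would construct a measurable $\chi\colon Y \to \mathbb{T}$ whose coboundary $\chi \circ S - \chi$ is continuous off $y_{*}$ and matches the jump $\Delta$ at $y_{*}$. The construction would use a clopen neighborhood basis of $y_{*}$ together with the partition of $Y \setminus \{y_{*}\}$ into the (open) level sets of $m$ to produce a step-function type $\chi$, designed so that at most one preimage of a discontinuity of $F$ (say $h^{-1}(x_{1})$) is touched, while the other, $h^{-1}(x_{2})$, remains a discontinuity of $G - \chi$.

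Setting $\tilde{g} := g - (\chi \circ S - \chi) = (G - \chi) \circ S - (G - \chi)$ then yields a continuous function on $Y$ that is a measurable coboundary with measurable solution $G - \chi$. To see $\tilde{g}$ is not a continuous coboundary, observe that $(Y, \nu, S)$ is ergodic since strong orbit equivalence preserves unique ergodicity by Theorem 2.2 of \cite{Giordano}, so Remark \ref{cob soln are unique} forces any measurable solution to agree with $G - \chi$ up to a constant; but $G - \chi$ remains discontinuous at $h^{-1}(x_{2})$. This also highlights the necessity of the hypothesis that $F$ has more than one discontinuity: it ensures the repair operation does not accidentally produce a continuous solution.
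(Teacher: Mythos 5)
The first half of your argument is sound and runs parallel to the paper's computation: with $G=F\circ h$ the telescoping $g(y)=G(Sy)-G(y)=\sum_{i=0}^{m(y)-1}f(T^{i}h(y))$ does show that $g$ is continuous on the open set where $m$ is locally constant, i.e.\ on $Y\setminus\{y_{*}\}$. The gap is the ``repair'' step, on which everything else depends. First, the defect of $g$ at $y_{*}$ is not ``a specific element $\Delta\in\mathbb{T}$'': $m$ is integer-valued and merely fails to be continuous at $y_{*}$, so in every neighborhood of $y_{*}$ it may take infinitely many values and $g$ may oscillate with no limit --- an essential discontinuity, not a jump. Second, even granting a well-defined defect, no construction of $\chi$ is given, and the constraints on it are severe: a continuous $\chi$ is useless (it changes nothing), so $\chi$ must be discontinuous, and then $\chi\circ S-\chi$ is generically discontinuous on $\mathrm{disc}(\chi)\cup S^{-1}\mathrm{disc}(\chi)$; you need all of these discontinuities to cancel except at the single point $y_{*}$, where they must reproduce exactly the (possibly essential) singular behavior of $g$, while simultaneously $G-\chi$ must remain discontinuous at $h^{-1}(x_{2})$. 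Arranging this is essentially the original difficulty restated, and the ``clopen basis plus level sets of $m$'' description does not produce such a $\chi$.

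The paper avoids the problem entirely by transferring the \emph{coboundary} rather than the solution: it takes $f\circ h$, which is continuous for free since $f$ and $h$ are continuous, and exhibits it as $G\circ S-G$ for the measurable function $G(y)=\sum_{k=0}^{j(y)-1}(F\circ h\circ S^{k})(y)$ built from the other orbit cocycle $j$. The delicate point is then only the absence of a continuous solution, which is handled by transferring a hypothetical continuous $G$ back to $X$: the reconstructed solution $F'=\sum_{k=0}^{m(h^{-1}(x))-1}(G\circ h^{-1}\circ T^{k})(x)$ would be continuous away from the single discontinuity of $m\circ h^{-1}$, contradicting the hypothesis that $F$ (hence $F'$, by Remark \ref{cob soln are unique}) has more than one discontinuity. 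Your closing ergodicity-plus-uniqueness argument is the right idea for that last step, but as written it is stranded on the unconstructed $\chi$; I would recommend abandoning the corrector and working with $f\circ h$ from the start.
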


\begin{proof}
Let $h\colon Y \rightarrow X$ be a homeomorphism such that $(T\circ h)(y)=(h \circ S^{j(y)})(y)$ and $(h \circ S)(y)=(T^{m(y)} \circ h)(y)$ for some $j,m\colon Y \rightarrow \mathbb{Z}$ with each having at most one point of discontinuity. From the preceding equality we have 
\begin{equation}\label{eq: orbit equivalence}
S (h^{-1}(x))= h^{-1}(T ^{m \circ h^{-1}(x)}(x)).
\end{equation}

 Let $f=F\circ T-F$ for some measurable but not continuous $F\colon X \rightarrow \mathbb{T}.$ Then we have
\begin{equation}\label{eq:f h is a coboundary over S}
\begin{aligned}
f\circ h (y)&= F\circ T\circ h(y)-F \circ h(y)= F \circ h \circ S^{j(y)}(y)- F \circ h(y)\\ &= G \circ S(y)-G(y)
\end{aligned}
\end{equation}
where $$G(y):= \sum_{k=0}^{j(y)-1} (F \circ h \circ S^{k})(y).$$ We show $f \circ h$ is our desired measurable but not continuous coboundary over $(Y,\nu,S)$. First, it follows from \eqref{eq: orbit equivalence} and \eqref{eq:f h is a coboundary over S},
\begin{align*}
f(x)&= G \circ S \circ h^{-1} (x)-G \circ h^{-1} (x)= G \circ h^{-1} \circ T^{m \circ h^{-1}(x)}(x) - G \circ h^{-1} (x)\\ &= F' \circ T (x) - F'(x)
\end{align*}
where $${F'}(x)=\sum_{k=0}^{m(h^{-1}(x))-1} (G \circ h^{-1} \circ T^{k})(x).$$ Recall $F$ is not continuous, therefore, ${F'}$ is not continuous by Remark \ref{cob soln are unique}. Moreover, it has more than one discontinuity. Notice if $m$ is continuous, then we are done. Otherwise, let $z\in X$ be a discontinuity point of $m$ and let $x' \in X$ be a point of discontinuity for ${F'}$ such that $ h^{-1}(x') \neq z$. Then if $G$ were continuous, ${F'}$ would also be continuous, giving a contradiction. \end{proof}

\begin{proposition} \label{nontrivial measurable nils but trivial topological nils}
    For each $k\in \mathbb{N}$, there exists a strictly ergodic topologically weak mixing system such that $Z_k$ is a nontrivial extension of $Z_{k-1}$.
\end{proposition}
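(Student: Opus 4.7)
The plan is to adapt the $j = 0$ case of Proposition~\ref{proposition with one coboundary on k} with the base rotation $(\mathbb{T}, m_{\mathbb{T}}, T_{\alpha})$ replaced by the strictly ergodic topologically weak mixing Cantor system $(X, \nu, T)$ from Section~\ref{strong orbit equivalence}, which is strong orbit equivalent to the Sturmian $(X_{\alpha}, \sigma)$. First I would produce a measurable but not continuous coboundary $\tilde g \colon X \to \mathbb{T}$ over $(X, \nu, T)$: take $f \colon \mathbb{T} \to \mathbb{T}$ from Lemma~\ref{coboundary is a coboundary is a ...}, pull it back through the continuous almost one-to-one factor map $\pi_{S} \colon X_{\alpha} \to \mathbb{T}$ to get a continuous coboundary on the Sturmian whose measurable transfer has many discontinuities, and then invoke Lemma~\ref{measurable but not continuous coboundary over minimal cantor system}. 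Writing $\tilde g = \tilde G_1 \circ T - \tilde G_1$ with $\tilde G_1$ measurable but not continuous, and choosing $\beta \in \mathbb{T} \setminus \mathbb{Q}$ rationally independent from $\alpha$, I then consider the skew product
\[
R'(x, z_1, z_2, \ldots, z_k) = (Tx,\; z_1 + \tilde g(x) + \beta,\; z_2 + z_1,\; \ldots,\; z_k + z_{k-1})
\]
on $X \times \mathbb{T}^k$ equipped with $\nu \times m_{\mathbb{T}^k}$.

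Strict ergodicity of $R'$ would follow by iterated application of the results that ergodic continuous group extensions of strictly ergodic systems are strictly ergodic (Propositions 3.6 and 3.10 of \cite{Fur81}): the first layer is ergodic because $\tilde g + \beta$ is measurably cohomologous to the irrational constant $\beta$, and the successive layers follow as in Lemma~\ref{coboundary is a coboundary is a ...}. For topological weak mixing of $R'$, I would Fourier-expand any continuous eigenfunction $f$ of $R'$ as $f(x, \mathbf{z}) = \sum_{\mathbf{n}} f_{\mathbf{n}}(x)\, e(\mathbf{n} \cdot \mathbf{z})$ and observe that the upper-triangular unipotent matrix governing how the Fourier modes transform has fixed set $\{(n, 0, \ldots, 0) : n \in \mathbb{Z}\}$ with every other $L$-orbit infinite; an $L^2$-summability argument (using $\|f_{L\mathbf{n}}\|_2 = \|f_{\mathbf{n}}\|_2$) then forces the Fourier support of $f$ to lie in this fixed set, so $f$ depends only on $(x, z_1)$. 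This reduces the question to the first layer $(x, z_1) \mapsto (Tx, z_1 + \tilde g + \beta)$: the $n = 0$ Fourier coefficient is handled by topological weak mixing of $(X, T)$, while for $n \neq 0$, a continuous eigenfunction would force $n \tilde g$ to be continuously cohomologous to some constant, which must vanish by integration (as $\tilde g$ is a coboundary), hence $n \tilde G_1$ would agree with a continuous function up to an additive constant, contradicting the discontinuity of $\tilde G_1$ for our choice of $f$.

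To show $Z_k(R')$ is a nontrivial extension of $Z_{k-1}(R')$, I would build a chain $\tilde g = \tilde G_1 \circ T - \tilde G_1$, $\tilde G_i = \tilde G_{i+1} \circ T - \tilde G_{i+1}$ for $1 \leq i \leq k-1$ of measurable coboundaries on $X$, obtained by iterating the Birkhoff-sum formula from the proof of Lemma~\ref{measurable but not continuous coboundary over minimal cantor system} starting from the chain on $(\mathbb{T}, T_\alpha)$ provided by Lemma~\ref{coboundary is a coboundary is a ...}. The cascading substitutions $(x, z_i) \mapsto (x, z_i - \tilde G_i(x))$ would then give a measurable isomorphism between $R'$ and the direct product $(X, \nu, T) \times (\mathbb{T}^k, m_{\mathbb{T}^k}, \hat T)$ where
\[
\hat T(z_1, \ldots, z_k) = (z_1 + \beta,\, z_2 + z_1,\, \ldots,\, z_k + z_{k-1})
\]
is the $k$-step affine nilsystem on $\mathbb{T}^k$. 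Applying Proposition 9.1 of \cite{Radic} on measurable nilfactors of products yields $Z_k(R') = Z_k(X, T) \times Z_k(\mathbb{T}^k, \hat T)$, which strictly contains $Z_{k-1}(R')$ since the nilfactors of $(\mathbb{T}^k, \hat T)$ grow strictly up to level $k$. The main obstacle I anticipate is verifying the chain of coboundaries on $X$: Lemma~\ref{measurable but not continuous coboundary over minimal cantor system} directly provides only the first level, so extending the chain requires careful iteration of the Birkhoff-sum construction and checking at each level that the resulting function is both measurable and serves as a transfer for the preceding one.
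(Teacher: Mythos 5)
Your overall strategy matches the paper's: take the topologically weak mixing Cantor system $(X,\nu,T)$ that is strong orbit equivalent to the Sturmian, transport a Furstenberg coboundary to it via Lemma \ref{measurable but not continuous coboundary over minimal cantor system}, and build a tower of circle extensions over $X$ so that measurably the system carries a $k$-step affine nilfactor while no nonconstant continuous eigenfunctions survive. The strict ergodicity argument and the reduction of continuous eigenfunctions to the first fiber coordinate are both in line with what the paper does.

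The genuine gap is exactly the one you flag and do not resolve: your skew product places $\tilde g$ only in the $z_1$-coordinate, so untwisting $R'$ into $(X,T)\times(\mathbb{T}^k,\hat T)$ requires a full chain $\tilde g=\tilde G_1\circ T-\tilde G_1$, $\tilde G_i=\tilde G_{i+1}\circ T-\tilde G_{i+1}$ of coboundaries over the Cantor system, and none of the cited lemmas produce this. Lemma \ref{measurable but not continuous coboundary over minimal cantor system} transports a single coboundary, and the transfer function it outputs is a Birkhoff sum of the old transfer function along the orbit-equivalence time change $j(y)$; by Remark \ref{cob soln are unique} this Birkhoff sum \emph{is} $\tilde G_1$ up to a constant, and there is no reason it is again a coboundary over $(X,T)$ just because $G_1$ is one over $(\mathbb{T},T_\alpha)$ --- the circle chain in Lemma \ref{coboundary is a coboundary is a ...} rests on delicate Fourier-coefficient decay that is destroyed by composing with the orbit equivalence and summing along orbits. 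So ``iterating the Birkhoff-sum formula'' is not a routine verification but an unproven claim on which the whole nilfactor computation (and hence property that $Z_k\neq Z_{k-1}$) depends. The paper's proof is structured precisely to avoid this: it inserts the transported coboundary $h(x_1)$ into \emph{every} fiber coordinate, so that a single transfer function $G$ suffices to conjugate away all occurrences of $h$ at once, and no chain of higher transfer functions over $X$ is ever needed. A secondary, fixable issue: in your weak mixing argument you conclude from a continuous eigenfunction with $n\neq 0$ that $n\tilde G_1$ is continuous up to a constant and call this a contradiction, but you need $n\tilde G_1$ discontinuous for \emph{every} $n\neq 0$, not just $n=1$; this is why Lemma \ref{measurable but not continuous coboundary over minimal cantor system} carries the ``more than one discontinuity'' hypothesis and why the paper explicitly checks that $mF$ is discontinuous on $\{n\alpha\colon n\in\mathbb{N}\}$ for all $m\neq 0$ before transporting.
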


\begin{proof}
    For $k=1$, we take the example $(X,T)$ as defined in Section \ref{strong orbit equivalence} and for $k \geq 2$, we build an extension of $X$. Let $f\colon \mathbb{T}\rightarrow \mathbb{T}$ be the measurable but not continuous coboundary over $(\mathbb{T},m_{\mathbb{T}},T_{\alpha})$ constructed in Lemma \ref{coboundary is a coboundary is a ...}. Write $f=F\circ T_{\alpha}-F$, where $F\colon \mathbb{T} \rightarrow \mathbb{T}$ is not continuous. Let $\pi\colon X_{\alpha} \rightarrow \mathbb{T}$ be the measurable isomorphism between $(X_\alpha,\sigma)$ and $(\mathbb{T},T_{\alpha})$. 
Note $\pi^{-1}$ is not continuous, however, $\pi$ is continuous. Then $$F\circ \pi\circ \sigma -F\circ \pi$$ is a measurable but not continuous coboundary over the system $(X_{\alpha},\sigma)$. Since $T$ and $\sigma$ are strong orbit equivalent, by Lemma \ref{measurable but not continuous coboundary over minimal cantor system}, there exists a measurable but not continuous coboundary $h\colon X \rightarrow \mathbb{T}$ over $(X,\mu,T)$ that we write as $$h=G \circ T-G.$$ Consider the following extension of $X$ given by 
    $$S(x_1,x_2,\dots,x_{k+1})=(Tx
    _1, x_2+h(x_1)+\beta, x_3+h(x_1)+x_2,\dots,x_{k+1}+h(x_1)+x_k),$$ where $S\colon X \times \mathbb{T}^{k} \rightarrow X \times \mathbb{T}^{k}$ and $\beta \notin \mathbb{Q}$ rationally independent from $\alpha$.
     Then $(X \times \mathbb{T}^{k},\mu\times m_{\mathbb{T}^{k}},S)$ is measurably isomorphic to $$S'(x_1,x_2,\dots,x_{k+1})=(Tx_1,x_2+\beta,x_3+x_2,\dots,x_{k+1}+x_k);$$ the isomorphism being given by the map $$(x_1,x_2,\dots,x_{k+1}) \mapsto (x_1,x_2-G(x_1),\dots, x_{k+1}-G(x_1)).$$ Since ergodicity transfers through orbit equivalences, $S$ is ergodic. Strict ergodicity follows by $S$ being a sequence of ergodic group extensions of uniquely ergodic systems and $\mathrm{Supp}(\mu \times m_{\mathbb{T}^{k}})=X\times  \mathbb{T}^{k}$ (see Proposition 3.10 of \cite{Fur81}).
     
     The eigenfunctions of $(X \times \mathbb{T}^{k},\mu\times m_{\mathbb{T}^{k}},S)$ are then of the form $$f_{n,m}(x_1,x_2,\dots,x_{k+1}):=f_{n}(x_1)e(mx_2)e(-mG(x_1)), \;\;n,m \in \mathbb{Z},$$ where $f_{n}\colon X \rightarrow \mathbb{T}$ are the eigenfunctions of $T$. Note $mG$ is not continuous because the measurable not continuous coboundary constructed in Lemma \ref{coboundary is a coboundary is a ...}, $mF$, $m\neq 0$, is discontinuous on the set $\{n\alpha\colon n \in \mathbb{N}\}$ and then we can apply Lemma \ref{measurable but not continuous coboundary over minimal cantor system} to $mf$. This implies there are no non-constant continuous eigenfunctions of $S$. Hence $X_1$ is trivial. Furthermore, the $k$-step nilsystem $$(x_1,x_2,\dots,x_k)\mapsto(x_1+\beta,x_2+x_1,\dots,x_k+x_{k-1})$$ is a clear factor of $(X \times \mathbb{T}^{k},\mu\times m_{\mathbb{T}^{k}},S)$, concluding the proof. \end{proof}

\subsection{Proof of Theorem \ref{the theorem in introduction}}

  An example realizing the case when $j=\ell=0$ is in Proposition \ref{nontrivial measurable nils but trivial topological nils}. The remaining cases are realized by Propositions \ref{proposition with one coboundary on k}, \ref{proposition with two coboundaries on k}, and \ref{the combined proposition}.

\subsection{Flexibility for $\mathbb{Z}^{d}$ actions}

In \cite{Frantzikinakis and Kra}, it is shown for a system with $d$ commuting transformations $(X,\mu,T_1,T_2\dots,T_d)$, $T_1$,...,$T_d$, share the same sequence of measurable nilfactors. The topological analog was proven in \cite{Shao and Xu}. This allows us to generalize the examples in Proposition \ref{proposition with one coboundary on k}, \ref{proposition with two coboundaries on k}, \ref{the combined proposition} to $\mathbb{Z}^{d}$ actions. For any $c\in \mathbb{T}$, let $T_c\colon \mathbb{T}^{k}\rightarrow \mathbb{T}^{k}$ be
\begin{align*}
    T_c(x_1,x_2,\dots,x_{k})=\Big(&x_1+\alpha,x_2+x_1,\dots,x_{j}+x_{j-1},x_{j+1}+f(x_1)+x_{j}+c,\\
    &x_{j+2}+x_{j+1},\dots,x_{k}+x_{k-1}\Big).
  \end{align*} where $f$ is the measurable but not continuous coboundary constructed in Lemma \ref{coboundary is a coboundary is a ...}.
Then for any constants $c_i\in \mathbb{T}$, $(\mathbb{T}^{k},m_{\mathbb{T}^{k}},T_{c_1},\dots,T_{c_d})$ is strictly ergodic because $T_{c_1}$ is strictly ergodic and $T_{c_i}$ all preserve Haar measure. Proceeding as in the proof of Proposition \ref{proposition with one coboundary on k}, for all $i$, properties (1)-(4) hold for $T_{c_i}$. We generalize Propositions \ref{proposition with two coboundaries on k} and \ref{the combined proposition} similarly by taking transformations which add a constant in the same coordinate as the coboundary. The $\mathbb{Z}^{d}$ analog of the example with nontrivial measurable nilfactors and trivial topological nilfactors exists by a generalization of Lehrer's result \cite{Lehrer} to $\mathbb{Z}^{d}$ actions given by Rosenthal \cite{Rosenthal}.

\medskip

\noindent\textbf{Acknowledgments.} 
The author is grateful to Bryna Kra for her continued guidance, helpful discussions, and feedback throughout the project. The author would also like to thank Amadeus Maldonado and Trist\'an Radi\'c for many enlightening discussions and for comments on a final draft. Finally, the author thanks Keith Burns, Anh Le, Redmond McNamara, and Wenbo Sun for fruitful discussions.

 \end{proof}
 
\end{comment}


\begin{thebibliography}{99}

 \bibitem{Auslander}L. Auslander, L.~W. Green and F.~J. Hahn, {\it Flows on homogeneous spaces}, Annals of Mathematics Studies, No. 53, Princeton Univ. Press, Princeton, NJ, (1963).

  \bibitem{Dong} P. Dong, S. Donoso, A. Maass, S. Shao and X. Ye, Infinite-step nilsystems, independence and complexity, Ergodic Theory Dynam. Systems {\textbf{33}} (2013), no.~1, 118--143.

\bibitem{nontrivial Kronecker trivial MEF} F. Durand, A. Frank and A. Maass, Eigenvalues of minimal Cantor systems, J. Eur. Math. Soc. (JEMS) \textbf{21} (2019), no.~3, 727--775.

\bibitem{Frantzikinakis and Kra}N. Frantzikinakis and B. Kra, Convergence of multiple ergodic averages for some commuting transformations, Ergodic Theory Dynam. Systems {\bf 25} (2005), no.~3, 799--809.

\bibitem{Fur61}H. Furstenberg, Strict ergodicity and transformation of the torus, Amer. J. Math. {\bf 83} (1961), 573--601.


\bibitem{Fur81}H. Furstenberg, {\it Recurrence in ergodic theory and combinatorial number theory}, Princeton Univ. Press, Princeton, NJ, (1981).

\bibitem{Furstenberg and Weiss}H. Furstenberg and B. Weiss, A mean ergodic theorem for $(1/N)\sum^N_{n=1}f(T^nx)g(T^{n^2}x)$, in {\it Convergence in ergodic theory and probability (Columbus, OH, 1993)}, 193--227, Ohio State Univ. Math. Res. Inst. Publ., 5, de Gruyter, Berlin.

\bibitem{topological characteristic factors}E. Glasner, W. Huang, S. Shao, B. Weiss, X. Ye, Topological characteristic factors and nilsystems, J. Eur. Math. Soc. (JEMS) {\bf 27} (2025), no.~1, 279--331.

 \bibitem{Gla}E. Glasner, A. Maass, S. Shao, X. Ye, Regionally proximal relation of order $d$ along arithmetic progressions and nilsystems, Sci. China Math. {\bf 63} (2020), no.~9, 1757--1776.

 \bibitem{Giordano}T. Giordano, I.~F. Putnam and C.~F. Skau, Topological orbit equivalence and $C^*$-crossed products, J. Reine Angew. Math. {\bf 469} (1995), 51--111.

 \bibitem{maximal pronilfactors} Y. Gutman and Z. Lian, Maximal pronilfactors and a topological Wiener-Wintner theorem, Israel J. Math. {\bf 251} (2022), no.~2, 495--526.

 \bibitem{HK paper}B. Host and B. Kra, Nonconventional ergodic averages and nilmanifolds, Ann. of Math. (2) {\textbf {161}} (2005), no.~1, 397--488.

\bibitem{HK2}B. Host and B. Kra, {\it Nilpotent structures in ergodic theory}, Mathematical Surveys and Monographs, 236, Amer. Math. Soc., Providence, RI, (2018).

\bibitem{Host Kra and Maass}B. Host, B. Kra and A. Maass, Nilsequences and a structure theorem for topological dynamical systems, Adv. Math. {\bf 224} (2010), no.~1, 103--129.

\bibitem{Jewett}R.~I. Jewett, The prevalence of uniquely ergodic systems, J. Math. Mech. {\bf 19} (1969/70), 717--729.


\bibitem{Katznelson}Y. Katznelson, {\it An introduction to harmonic analysis}, third edition, 
Cambridge Mathematical Library, Cambridge Univ. Press, Cambridge, (2004).

%\bibitem[8]{8}Kolmogorov A. N. On dynamical systems with an integral invariant on the torus.
%Doklady Akad. Nauk SSSR (N.S.) 93 (1953), 763–766.

\bibitem{a model with topological pronilfactors} B. Kra, J. Moreira, F. Richter and D. Robertson, Infinite sumsets in sets with positive density, J. Amer. Math. Soc. {\textbf{37}} (2024), no.~3, 637--682.

\bibitem{Krieger}W. Krieger, On unique ergodicity, in {\it Proceedings of the Sixth Berkeley Symposium on Mathematical Statistics and Probability (Univ. California, Berkeley, Calif., 1970/1971), Vol. II: Probability theory}, pp. 327--346, Univ. California Press, Berkeley, CA.

\bibitem{Lehrer}E. Lehrer, Topological mixing and uniquely ergodic systems, Israel J. Math. {\textbf{57}} (1987), no.~2, 239--255.

\bibitem{Leibman} A. Leibman, Pointwise convergence of ergodic averages for polynomial sequences of translations on a nilmanifold, Ergodic Theory Dynam. Systems {\bf 25} (2005), no.~1, 201--213.

\bibitem{Lesigne}E. Lesigne, Sur une nil-vari\'et\'e, les parties minimales associ\'ees \`a{} une translation sont uniquement ergodiques, Ergodic Theory Dynam. Systems {\bf 11} (1991), no.~2, 379--391.

\bibitem{Parry}W. Parry, A note on cocycles in ergodic theory, Compositio Math. {\bf 28} (1974), 343--350.

\bibitem{Parry2}W. Parry, Ergodic properties of affine transformations and flows on nilmanifolds, Amer. J. Math. {\bf 91} (1969), 757--771.

\bibitem{Poincare} H. Poincar\'e, M\'emoire sur les courbes d\'efinies par une \'equation diff\'erentielle (I), Journal de Math\'ematiques Pures et Appliqu\'es {\textbf{7}} (1881), 375--422.

\bibitem{Radic} T. Radi\'c, Infinite sumsets in $U^k(\Phi)$-uniform sets, arxiv 2601.06915.

\bibitem{Rosenthal}A. Rosenthal, Strictly ergodic models and topological mixing for ${\bf Z}^2$-action, Israel J. Math. {\bf 60} (1987), no.~1, 31--38.

\bibitem{Shao and Xu}S. Shao and H. Xu, Structure theorems of commuting transformations and minimal $\mathbb{R}$-flows, arxiv 2505.14205.

\bibitem{Shao and Ye}S. Shao and X. Ye, Regionally proximal relation of order $d$ is an equivalence one for minimal systems and a combinatorial consequence, Adv. Math. {\bf 231} (2012), no.~3-4, 1786--1817.

\bibitem{Walters}P. Walters, {\it An introduction to ergodic theory}, Graduate Texts in Mathematics, \textbf{79}, Springer, New York-Berlin, (1982).








\end{thebibliography}
\end{document}